\documentclass[11pt,reqno]{amsart}

\usepackage[T1]{fontenc}
\usepackage{lmodern}
\usepackage[a4paper,margin=29mm]{geometry}

\usepackage{amsmath,amssymb,amsthm}
\usepackage{mathrsfs}
\usepackage{booktabs}
\usepackage{microtype}

\usepackage[
colorlinks=true,
linkcolor=blue,
citecolor=blue,
urlcolor=blue
]{hyperref}

\newcommand{\Q}{\mathbb{Q}}
\newcommand{\Z}{\mathbb{Z}}
\newcommand{\R}{\mathbb{R}}
\newcommand{\C}{\mathbb{C}}
\newcommand{\E}{\mathbb{E}}

\newcommand{\OO}{\mathcal{O}}

\DeclareMathOperator{\rd}{rd}
\DeclareMathOperator{\vol}{vol}
\DeclareMathOperator{\covol}{covol}
\DeclareMathOperator{\NN}{N}
\DeclareMathOperator{\Gal}{Gal}
\DeclareMathOperator{\Cl}{Cl}
\DeclareMathOperator{\logmap}{Log}

\newtheorem{theorem}{Theorem}[section]
\newtheorem{proposition}[theorem]{Proposition}
\newtheorem{lemma}[theorem]{Lemma}
\newtheorem{corollary}[theorem]{Corollary}

\theoremstyle{remark}
\newtheorem{remark}[theorem]{Remark}

\numberwithin{equation}{section}

\title[Ideal-smooth sets and the sum-product exponent]
{Ideal-smooth sets and an explicit upper bound for the real sum-product exponent}

\author{Kaiqiang Zhang}

\author{Yuanguo Zeng}

\author{Yuyu Wang}

\date{}

\subjclass[2020]{11B13, 11R37, 11R27, 52A40}

\keywords{
	Sum-product problem,
	class field towers,
	$S$-units,
	regulators,
	lattice packing
}

\begin{document}
\begin{abstract}
We construct arbitrarily large finite sets $A$ of real algebraic integers
such that both $|A+A|$ and $|AA|$ are at most $|A|^{1.95835}$. The
construction combines truncated ideal-smooth $S$-unit fibres with a
coprime additive factor. A tensor-product rank argument, using a
two-dimensional local feature at each selected prime ideal, controls
the loss in the additive factor, while a direct estimate for an outer
parallel body improves the sumset packing bound. The arithmetic
input is an unramified pro-$2$ tower over a known degree-ten field, with
simultaneous Frobenius cuts controlling the small prime ideals. We use
unconditional Tsfasman--Vl\u adu\c t inequalities for the joint
class-number--regulator cost. All finite numerical comparisons entering
the exponent are certified by outward rational interval arithmetic.
No unproved hypothesis is used.
\end{abstract}
\maketitle

\section{Introduction}
For a finite set $A$ in a field, write
\[
 A+A=\{a+b:a,b\in A\},\qquad AA=\{ab:a,b\in A\}.
\]
The sum-product problem asks how small these two sets can be
simultaneously. Bloom, Sawin, Schildkraut, and Zhelezov constructed
arbitrarily large real sets with a fixed power saving from the
quadratic exponent \cite{BSSZ}. Their construction uses units of
number fields together with an additive lattice-point factor. We
retain this useful interaction, and enlarge the multiplicative
factor to include many truncated prime-ideal valuations.

\begin{theorem}\label{thm:main}
There are finite sets $A_j\subset\R$ of real algebraic integers, with
$|A_j|\to\infty$, such that
\[
 \max\{|A_j+A_j|,|A_jA_j|\}\le |A_j|^{1.95835}.
\]
The construction and the bound are unconditional.
\end{theorem}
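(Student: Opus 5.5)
The plan is to follow the Bloom--Sawin--Schildkraut--Zhelezov template: take $A=\{u\cdot x\}$ in a totally real field $K_j$ from an unramified pro-$2$ tower over the fixed degree-ten base field. Here $u$ runs over a multiplicative set $U$ of $S$-units and $x$ runs over an additive set $X$ of lattice points.

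Concretely, fix a finite set $S$ of prime ideals, chosen by simultaneous Frobenius cuts. Let $U$ be the set of algebraic integers $\alpha$ with $v_{\mathfrak p}(\alpha)\in[0,T_{\mathfrak p}]$ for $\mathfrak p\in S$, no other prime factors, and logarithmic embedding $\logmap\alpha$ in a box of side $R$. These are the truncated ideal-smooth fibres. Let $X$ consist of elements of $\OO_{K_j}$ coprime to $S$ whose archimedean embeddings lie in a box $[-N,N]^{n}$. Coprimality makes $(u,x)\mapsto ux$ essentially injective, so $|A|\approx|U||X|$.

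Then I bound the two sides separately:
\[
|AA|\le|UU|\,|XX|,\qquad |A+A|\le|U|^{2}\max_{u,u'}|uX+u'X|.
\]
For the product side, $|UU|\lesssim 2^{n}|U|$ up to valuation doubling $\prod(2T_{\mathfrak p}+1)/(T_{\mathfrak p}+1)$. Also $|XX|$ is bounded by counting lattice points in a product body, which costs roughly $N^{2n}$ against $|X|\approx N^n/\sqrt{d_K}$.

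For the sum side, $uX+u'X$ lies in a dilated parallel body. I estimate its lattice-point count by the direct outer-parallel-body volume bound, and improve this with the tensor-product rank argument. The reduction of $X$ modulo each $\mathfrak p\in S$ carries a two-dimensional local feature. Counting the rank of $\sum(u\otimes x)$ in $\prod\OO/\mathfrak p^{T}\otimes(\text{box})$ shows that the additive loss is governed by a product of local factors, not a full square.

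Next I collect all sizes in logarithmic form, normalized by the degree $n\to\infty$ along the tower. The parameters are the root discriminant $\rd$ (bounded in the tower), the class number times regulator ($h_KR_K$, which controls how many $S$-units and boxes there are), the norms $\NN\mathfrak p$, and $R$, $N$, $T_{\mathfrak p}$. This yields an exponent
\[
\max(\log|A+A|,\log|AA|)/\log|A|\le F(\text{parameters}),
\]
where the parameters are expressed via limits $\lim\log(h_KR_K)/n$ and the prime-counting ratios $\lim N_q(K_j)/n$. Here I invoke the unconditional Tsfasman--Vl\u adu\c t basic inequality and the Brauer--Siegel-type relation
\[
\lim\frac{\log h_KR_K}{n}=\log\sqrt{\rd}+\sum_q\phi_q\log\frac{q}{q-1}.
\]
The Frobenius cuts fix the $\phi_q$ for small $q$ in the selected primes, and the inequality bounds the tail contribution.

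Finally I optimize $F$ over a finite grid and certify $F<1.95835$ with outward rational interval arithmetic.

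The main obstacle is the tensor-rank step bounding $|uX+u'X|$ uniformly in $u,u'$. It needs a clean local model at each $\mathfrak p$ in which the loss is multiplicative across primes. I would first try a CRT decomposition combined with a Freiman-type box count.
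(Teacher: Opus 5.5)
Your plan has genuine gaps, beginning with the field. The degree-ten base field $F=k(\sqrt\eta)$ is totally imaginary, because $\eta$ is negative at all three real places of $k$. Hence every layer $K_j$ of an unramified tower over it is totally imaginary, and no totally real $K_j$ exists. The paper works instead in $L_j=K_j^{\langle c\rangle}$. This field has a real embedding but only $0<r_1/d\le 3/5$, and it need not be Galois. Its prime counts therefore cannot be read off from the Frobenius cuts; they come from the coset/centralizer comparison \eqref{arith:orbit}, which produces the signature corrections $\mathcal C_{\mathfrak p}$. Only the Tsfasman--Vl\u{a}du\c{t} upper bound \eqref{arith:BS} is available unconditionally here. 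Your displayed equality is not, and it also drops $-\mu\log2-\nu\log(2\pi)$. The positive Euler sum in that bound is controlled by trading against the basic inequality through $V(u)=b(u)-c_*a(u)$, which is negative for $u\ge C$.

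On the multiplicative side, the selected primes need not be principal. A valuation vector is realized by an $S$-unit only if it lies in the sublattice $\Lambda_v$ of index $J_S\le h_L$, and the unit fibres above different vectors are unrelated translates of $\Lambda_U$. Counting your $U$ therefore requires the averaging over cosets of $\Lambda_v$ and over a fundamental domain of $\Lambda_U$ in Lemma~\ref{smooth:count-doubling}. Your valuation doubling $\prod(2T_{\mathfrak p}+1)/(T_{\mathfrak p}+1)$ is unjustified for a sublattice of exponentially large index. The paper chooses a maximal translate and obtains only $|HH|\le 2^{n-1+m}|H|$, from the cosets of $2\Gamma_S$. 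Box truncation per prime is also weaker than the paper's simplex truncation, whose entropy $h(r)$ exceeds $\log(1+r)$.

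The sum side is where the stated exponent is lost. The estimate $|A+A|\le|U|^2\max_{u,u'}|uX+u'X|$ pays the exponentially large factor $|U|^2$ and discards every coincidence between sums coming from different unit pairs. The paper instead centres each unit individually and uses the sharp two-point moment inequality of Lemma~\ref{geom:moment} to place all of $A+A$ in a single body $\mathcal B_\tau(dS_d)$. It then counts $A+A$ in one step by packing disjoint translates of $\mathcal Q$, using norm separation in the fractional ideal $I\supset A+A$ together with Lemma~\ref{geom:outer-parallel}. That count is governed by the body alone and does not grow with $|H|$. At the paper's parameters, replacing it by your union bound costs on the order of $10^{-2}$ in the final criterion, whose certified margin is below $2\cdot10^{-4}$.

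The tensor-rank argument is not a sumset estimate at all. It is the coprimality sieve of Lemma~\ref{smooth:rank-sieve}: after a translation, a proportion $2^{-m}$ of a box of lattice points avoids every selected prime, with no equidistribution modulo an ideal of exponentially large norm. Your plan assumes such a coprime set without proof. The step you call the main obstacle is therefore not a step of the actual argument.

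Finally, coprimality does not make $(u,x)\mapsto ux$ injective for a box centred at $0$, since $ux=(u\epsilon^{-1})(\epsilon x)$ for every unit $\epsilon$ that keeps both factors in range. The paper centres the additive factor at a large rational prime $X$, with $|\sigma_i(p)-X|\le\varepsilon X$ and $\varepsilon<1/\sqrt5$. This forces the unit ratio to be $1$ (Lemma~\ref{smooth:injectivity}).
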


In particular, defining
\[
 L_{\mathbb F}=\liminf_{N\to\infty}
 \inf_{\substack{A\subset\mathbb F\\ |A|=N}}
 \frac{\log\max\{|A+A|,|AA|\}}{\log N},
 \qquad \mathbb F\in\{\R,\C\},
\]
we obtain $L_{\R},L_{\C}\le1.95835$. The degrees of the number
fields grow with $j$. The assertion is an existence statement along
a sequence of cardinalities tending to infinity.

The multiplicative factor is a union of unit fibres over truncated
smooth ideals. It must be defined through the $S$-unit valuation
lattice: the relevant prime ideals need not be principal, and
independently chosen generators need not have compatible
archimedean sizes. A single class-group average and a common
logarithmic translate handle both issues. The final set has the
form $A=HP$, where $H$ is this ideal-smooth factor and $P$ is an
additive factor coprime to all selected prime ideals.

There are two geometric improvements. First, a finite-field matrix
argument gives the coprimality sieve a loss of at most $2$ for each
selected prime ideal, without assuming equidistribution of lattice
points in residue classes. Second, rather than enlarging an
$\ell^\tau$ body by pointwise subadditivity, we estimate its actual
outer parallel body. A concave radial minimization reduces the
estimate to strata with at most one exceptional coordinate.
The resulting volume loss is small enough for the stated exponent.

The tower starts from the quintic field and its quadratic extension
of Hajir--Maire \cite[\S3.2]{HM}. The simultaneous Frobenius cuts are
obtained by the method of Hajir--Maire--Ramakrishna
\cite[\S\S1.1--1.4 and \S2.1]{HMR}. Since the resulting real fields
need not be normal over $\Q$, we compare prime decomposition in the
normal tower with that in the fixed field of complex conjugation.
The signed Euler terms are retained in the Tsfasman--Vl\u adu\c t
estimates \cite{TV}.

All terminating decimals used as parameters or numerical bounds are
exact rational numbers. The finite numerical inequalities are established
by outward rational interval arithmetic, as described in
Appendix~
\ref{app:intervals}; the geometric and tower arguments are given
in the text.

\subsection{Conventions}
All logarithms are natural. For a number field $L$, $d=[L:\Q]$,
$\Delta_L$ is its absolute discriminant, $\rd(L)=|\Delta_L|^{1/d}$,
$h_L$ is its class number, and $R_L$ is the usual regulator using
weight two at complex places. We give each complex coordinate in
Minkowski space its ordinary planar Lebesgue measure. The integer
lattice then has covolume $2^{-r_2}\sqrt{|\Delta_L|}$.
The norm of an ideal is its absolute norm. For $x\in\R$, put
$x_+=\max\{x,0\}$. Constants and the number of prime families are
fixed before the degrees tend to infinity, even when that number
is large.

\section{A Frobenius-controlled tower and its real subfields}
\label{sec:arithmetic}

We use the quintic field of Hajir and Maire \cite[\S3.2]{HM}:
\begin{equation}\label{arith:base}
 k=\Q(\xi),\quad
 f_0(\xi)=0,\quad
 f_0(x)=x^5-2x^4+3x^3-3x^2-x+1.
\end{equation}
Put
\begin{equation}\label{arith:eta}
 \eta=-36\xi^4+125\xi^3-221\xi^2+182\xi-80,
 \qquad F=k(\sqrt\eta).
\end{equation}
Throughout this section set
\begin{equation}\label{arith:constants}
 C=16800001,\quad Y=70001,\quad
 \delta=84.37558833,\quad c_*=\frac{301}{10000}.
\end{equation}
Terminating decimals denote exact rational numbers.

\begin{lemma}\label{arith:model}
The field $k$ has signature $(3,1)$, discriminant $-31391$ and
$\OO_k=\mathbb Z[\xi]$. The field $F$ is totally imaginary of degree $10$,
and
\[
 |\Delta_F|=31391^2\cdot18559139599,\qquad
 18559139599=7^2 11^2 13\,19^2 23\,29.
\]
Exactly nine finite primes of $k$ ramify in $F/k$. They are
\[
 \begin{array}{c|rrrrrr}
 p&7&11&13&19&23&29\\ \hline
 \mathfrak p=(p,\xi-r),\ r&3,6&2,7&5&3,6&9&18 .
 \end{array}
\]
Each has residue degree one and occurs with valuation one in $(\eta)$.
The unique prime of $k$ of norm $32$ splits in $F/k$, as do
\[
 (43,\xi-6),\ (59,\xi-17),\ (61,\xi-56),\
 (83,\xi-63),\ (97,\xi-95),\ (131,\xi-21).
\]
\end{lemma}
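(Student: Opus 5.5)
The lemma is a finite computational verification, and the plan is to check each assertion directly from $f_0$ and $\eta$, in an order that lets each step feed the next.

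\textbf{Step 1: the quintic field $k$.} First compute the polynomial discriminant $\operatorname{disc}(f_0)$ by a resultant computation. The claim to obtain is $\operatorname{disc}(f_0)=-31391$.
\begin{itemize}
\item If $31391$ is squarefree (factor it: $31391=31\cdot1013$, to be confirmed), then $[\OO_k:\Z[\xi]]^2$ divides a squarefree number. Hence $\OO_k=\Z[\xi]$ and $\Delta_k=-31391$ follow at once.
\item The sign of the discriminant gives $(-1)^{r_2}=-1$, so $r_2$ is odd. A Sturm sequence, or sign changes of $f_0$ at a few rational points, gives exactly three real roots. Hence the signature is $(3,1)$.
\end{itemize}

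\textbf{Step 2: the relative discriminant of $F/k$.} Since $\OO_k=\Z[\xi]$, Dedekind--Kummer applies to every prime, with factorizations read off from $f_0$ modulo $p$.
\begin{itemize}
\item Compute $N_{k/\Q}(\eta)$ as the resultant $\operatorname{Res}(f_0,\,-36x^4+125x^3-221x^2+182x-80)$. The expected value is $\pm 7^2 11^2 13\cdot 19^2 23\cdot 29$, possibly times a power of $2$.
\item For each listed $p$ and root $r$, check $f_0(r)\equiv0\pmod p$ and $\eta(r)\equiv0\pmod p$. Comparing with the norm factorization then shows $v_{\mathfrak p}(\eta)=1$ for exactly these nine degree-one primes, and that $\eta$ is coprime to all other odd primes. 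This gives nine primes, counting two roots each for $7$, $11$ and $19$.
\item An odd prime with odd valuation ramifies in $k(\sqrt\eta)/k$, and odd primes with even (zero) valuation are unramified.
\end{itemize}

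\textbf{Step 3: the dyadic prime and the degree.} Modulo $2$, $f_0\equiv x^5+x^3+x^2+x+1$. I expect this to be irreducible, so $2$ is inert with the unique prime of norm $32$, and $\eta\equiv x^3+x^2+1$ is a unit there. The main obstacle is showing that this prime is unramified and in fact split.
\begin{itemize}
\item For a unit, it suffices to show that $\eta$ is a square modulo $4\OO_{k,2}$ in the unramified local field $\Q_{32}$. Then $\eta$ is a square in $k_2$ by Hensel's lemma, so the dyadic prime splits.
\item To do this, find $\alpha\in\Z[\xi]$ with $\alpha^2\equiv\eta \pmod 4$. First take a square root modulo $2$ via Frobenius, $\alpha_0=\eta^{16}$ in $\mathbb F_{32}$. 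Then solve the linear equation $2\alpha_0\beta\equiv \eta-\alpha_0^2\pmod 4$, which amounts to checking that $(\eta-\alpha_0^2)/2$ is a square mod $2$. That always holds in a perfect field, but one must correct $\alpha_0$ by $2\beta$, and the check reduces to whether $\eta$ is a square mod $4$ up to the standard trace condition. I would simply compute this explicitly.
\end{itemize}
Granting this, $\mathfrak d_{F/k}$ is the product of the nine primes, of norm $18559139599$. The conductor--discriminant formula then gives $|\Delta_F|=|\Delta_k|^2 N(\mathfrak d_{F/k})$, as stated. Also $\eta$ is not a square, being odd at $\mathfrak p$, so $[F:\Q]=10$.

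\textbf{Step 4: total imaginarity.} Evaluate $\eta$ at the three real roots of $f_0$ using interval approximations of those roots, and check that all three values are negative. Then $F$ has no real place, so it is totally imaginary.

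\textbf{Step 5: the six further split primes.} For each listed prime $(q,\xi-r)$, verify $f_0(r)\equiv0\pmod q$. Then check that $\eta(r)$ is a nonzero quadratic residue modulo $q$, by Euler's criterion; this shows the prime splits in $F/k$. All of these are routine modular computations, certified exactly.
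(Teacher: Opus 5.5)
\textbf{Genuine gap: the dyadic step (Step 3).} Your Steps 1, 2, 4 and 5 follow essentially the same route as the paper: Sylvester discriminant, Sturm count, norm of $\eta$ matched against the nine degree-one common roots, interval signs of $\eta$ at the real roots, and quadratic residues at the six odd split primes. Step 3, however, does not work as written.

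Your criterion ``$\eta$ is a square modulo $4$ in $k_2$, hence a square by Hensel'' is false. For $x^2-\eta$ with a unit root, Hensel's lemma needs $|\alpha^2-\eta|<|2\alpha|^2=|4|$. In the unramified field $k_2$ that is a congruence modulo $8$, not modulo $4$. Being a square modulo $4$ only characterizes unramifiedness of $k_2(\sqrt\eta)/k_2$. Already in $\Z_2$, $5\equiv1\pmod 4$, yet $\Q_2(\sqrt5)$ is the unramified quadratic extension and $2$ is inert in it.

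Your ``linear equation'' $2\alpha_0\beta\equiv\eta-\alpha_0^2\pmod 4$ also drops the $\beta^2$ term. In fact $\alpha^2\bmod 4$ depends only on $\alpha\bmod 2$, and square roots modulo $2$ are unique. So no correction of $\alpha_0$ can help: $\eta\equiv\alpha_0^2\pmod 4$ is a genuine condition that must be checked, not something automatic. You mention ``the standard trace condition'' but never state it or place it. As written, the plan cannot tell split from inert at the prime of norm $32$, and splitting is exactly what the lemma asserts.

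\textbf{The correct check.} It has two stages, and this is what the paper does.
\begin{enumerate}
\item Exhibit $\eta=\beta^2-4\gamma_0$ with $\beta,\gamma_0\in\OO_k$. The paper takes $\beta=1+\xi+\xi^4$ and $\gamma_0=20-45\xi+56\xi^2-31\xi^3+11\xi^4$, verified by reduction modulo $f_0$. This gives unramifiedness at $2$.
\item Then $\theta=(\beta+\sqrt\eta)/2$ satisfies $\theta^2-\beta\theta+\gamma_0=0$. The dyadic prime splits if and only if $x^2-\bar\beta x+\bar\gamma_0$ has a root in $\mathbb F_{32}$, that is, if and only if $\operatorname{Tr}_{\mathbb F_{32}/\mathbb F_2}(\bar\gamma_0/\bar\beta^2)=0$. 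Here $\bar\gamma_0/\bar\beta^2=1+\bar\xi+\bar\xi^2+\bar\xi^3$, which has trace $0$.
\end{enumerate}
The same identity gives the paper a cleaner discriminant argument than your conductor-type reasoning. The order $\OO_k[\theta]$ has relative discriminant $(\eta)$. This ideal is squarefree, so the order is maximal and $\mathfrak d_{F/k}=(\eta)$ directly, with no separate local analysis at $2$.

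\textbf{Minor slips.}
\begin{itemize}
\item $31\cdot1013=31403$, not $31391$; in fact $31391$ is prime. Only squarefreeness is used, so this is harmless.
\item $\eta\equiv\xi^3+\xi^2\pmod 2$, not $\xi^3+\xi^2+1$, because the constant term $-80$ is even. $\eta$ is still a dyadic unit, as $\NN_{k/\Q}(\eta)=-18559139599$ is odd. But a Step 3 computation started from your residue would be wrong.
\end{itemize}
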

\begin{proof}
The reduction of $f_0$ modulo $2$ is irreducible: in its quotient ring
$x^{32}=x$ and $\gcd(f_0,x^2-x)=1$. Since $5$ is prime, these two
identities are the irreducibility criterion. A Sylvester determinant gives
$\operatorname{disc}(f_0)=-31391$, and $31391$ is prime. Thus the integral
basis is $1,\xi,\ldots,\xi^4$. Sturm's theorem gives three real roots.
Rational isolation followed by interval Horner evaluation shows that
$\eta$ is negative at each of them.

The norm determinant gives $\NN_{k/\Q}(\eta)=-18559139599$.
For the six rational primes in the table, the displayed roots are exactly
the common roots of $f_0$ and $\eta$ in the prime field. Their prime norms
already multiply to $|\NN_{k/\Q}(\eta)|$. Consequently the displayed
factorization of $(\eta)$ is squarefree and exhaustive.

Let
\[
 \beta=1+\xi+\xi^4,\qquad
 \gamma_0=20-45\xi+56\xi^2-31\xi^3+11\xi^4.
\]
Reduction modulo $f_0$ gives $\eta=\beta^2-4\gamma_0$. Hence
$\theta=(\beta+\sqrt\eta)/2$ is integral, with equation
$\theta^2-\beta\theta+\gamma_0=0$.
The order $\OO_k[\theta]$ has relative discriminant $(\eta)$.
The square of its index ideal divides this squarefree ideal, so the
order is maximal. The relative discriminant formula proves the value of
$\Delta_F$. Negativity at all three real places proves the assertion
about the signature.

For each odd prime in the final list, substitution of the indicated root
shows that $\eta$ has a nonzero square residue. At $2$, write
$\bar\xi$ for the class of $x$ in $\mathbb F_{32}$. One computes
\[
 \bar\beta^{-1}=1+\bar\xi^2,\qquad
 \bar\gamma_0/\bar\beta^2=1+\bar\xi+\bar\xi^2+\bar\xi^3.
\]
The latter element has trace zero to $\mathbb F_2$, so the reduced
quadratic equation splits. All computations just described use exact
integer and finite-field arithmetic.
\end{proof}

\begin{proposition}\label{arith:tower}
There exists an infinite extension $K_\infty/F$, unramified at all finite
places, pro-$2$ over $F$, and Galois over $k$, with the following
Frobenius conditions. At the unique $F$-primes over
$(7,\xi-3),(7,\xi-6),(11,\xi-2)$ the Frobenius order divides $4$.
At the other six ramified $k$-primes and at all fourteen $F$-primes
over the seven split $k$-primes in Lemma~\ref{arith:model}, it divides $8$.
At every $F$-prime over a $k$-prime $\mathfrak p$ with
$\NN\mathfrak p<C$, it divides $32$.
\end{proposition}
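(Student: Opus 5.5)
The plan follows Hajir--Maire--Ramakrishna \cite[\S\S1.1--1.4, \S2.1]{HMR}. Let $G_S$ be the Galois group over $F$ of the maximal pro-$2$ extension unramified outside the set $S$ of primes where we impose conditions. We seek a quotient $\Gamma$ of the Galois group $G_\emptyset(2)$ of the maximal unramified pro-$2$ extension $F_\emptyset(2)/F$. This $\Gamma$ should be infinite and should carry the relations $\sigma_{\mathfrak P}^{2^{e_{\mathfrak P}}}=1$ for each Frobenius $\sigma_{\mathfrak P}$ at the listed primes $\mathfrak P$ of $F$. Here $e_{\mathfrak P}\in\{2,3,5\}$ is as in the statement.

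\emph{Galois over $k$.} Since $F/k$ is quadratic and $F_\emptyset(2)$ is characteristic, $F_\emptyset(2)/k$ is Galois. The set of listed primes is stable under $\Gal(F/k)$: it is defined by conditions on primes of $k$, with all $F$-primes above a given $k$-prime included. Let $N$ be the closed normal subgroup of $G_\emptyset(2)$ generated by the $2^{e}$-th powers of all Frobenius elements, over all lifts and all conjugates. Then $N$ is stable under the action of a lift of the nontrivial element of $\Gal(F/k)$. Hence $K_\infty=F_\emptyset(2)^N$ is Galois over $k$. It is unramified at finite places and pro-$2$ by construction; we impose no condition at infinity, since $F$ is totally imaginary. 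It remains to show that $\Gamma=G_\emptyset(2)/N$ is infinite.

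\emph{Infinitude via Golod--Shafarevich.} Write $d=d_2\Cl(F)$. By the Golod--Shafarevich inequality in the form of Vinberg/Koch, a finitely generated pro-$2$ group whose defining relations are weighted by their degree in the Zassenhaus filtration is infinite provided
\[
 1-dt+\sum_i t^{\deg r_i}<0
\]
for some $t\in(0,1)$. The group $G_\emptyset(2)$ has $d$ generators. By Shafarevich's bound it has at most $d+r_1+r_2-1+\delta_F=d+5$ relations, using $r_2=5$ and the presence of $\mu_2$, and these relations have degree at least $2$. We then add new relations $\sigma_{\mathfrak P}^{2^{e}}$, which lie in the $2^{e}$-th Zassenhaus layer and so have degree at least $2^{e}$. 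The primes with $\NN\mathfrak p<C$ produce a finite but large number $M$ of relations of degree at least $32$. The $23$ primes with exponent $8$ or $4$ give relations of degrees $8$ and $4$. The first step is therefore to compute $d$, presumably via genus theory for $F/k$: there are nine ramified primes and $k$ has class number $1$, which should give a large $2$-rank, with the split primes of Lemma~\ref{arith:model} used as in \cite[\S3.2]{HM}. We then need a $t$ with
\[
 1-dt+(d+5)t^2+3t^4+20t^8+Mt^{32}<0 .
\]
Counting prime ideals up to $C=16800001$ bounds $M$ by roughly $2\pi_k(C)\approx 2C/\log C$. The term $Mt^{32}$ must stay small, which requires $t\lesssim(1/M)^{1/32}$, approximately $0.6$. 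Here the role of $Y$, $\delta$ and $c_*$ in \eqref{arith:constants} presumably enters, as the certified numerical margin.

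\emph{Main obstacle.} The delicate point is the genuine degree bookkeeping. One must justify that Frobenius elements of $G_\emptyset(2)$ can be chosen so that $\sigma^{2^e}$ lies deep in the filtration; this is automatic, since $\sigma\in G$ gives $\sigma^{2^e}\in G_{(2^e)}$. One must also check the numerical inequality with an explicit $d$, and in the HMR refinement account for relations contributed by primes whose Frobenius is already trivial. I would first try to obtain $d\ge 8$ from genus theory, since $k$ has odd class number and nine ramified primes give $2$-rank about $9-1-(\text{unit defect})$. I would then verify the polynomial inequality at a single rational $t$ by interval arithmetic. If the naive bound fails, I would fall back on HMR's refined count, which uses the $2$-rank of a ray class group modulo the cut primes rather than raw relation counts.
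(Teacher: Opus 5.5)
Your route is the paper's: quotient the unramified pro-$2$ group of $F$ by the closed normal closure of the Frobenius powers, and use $\Gal(F/k)$-stability of the cut set for normality over $k$. The Golod--Shafarevich inputs also match: Shafarevich's bound $r\le D+5$ (the extra $1$ coming from $\mu_2\subset F$), Zassenhaus depth $2^e$ for the new relations, and the polynomial $1-Dt+(D+5)t^2+3t^4+20t^8+Mt^{32}$.

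\textbf{The gap.} Everything depends on a lower bound for the generator rank $D$ (your $d$), which you do not establish. The genus count you sketch would not give a usable one. With only $r\le D+5$ available, the quadratic part $1-Dt+(D+5)t^2$ has discriminant $D^2-4D-20<0$ for $D\le6$, so no $t$ works; $D\ge7$ is exactly what is needed. You count only the nine finite ramified primes, i.e.\ $9-1-\rho$, with $\rho$ the rank of the unit norm index. Norms from the totally imaginary $F$ are positive at the three real places of $k$, so $-1$ is not a norm and $\rho\ge1$. Hence your count never reaches the hoped-for $8$, and for $\rho=4$ it gives only $4$, where Golod--Shafarevich says nothing.

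The missing observation is that those three real places are themselves ramified in $F/k$. Hajir--Maire's genus bound therefore counts twelve ramified places and subtracts the $2$-rank $r_1(k)+r_2(k)=4$ of $E_k/E_k^2$ and $1$. This gives $D\ge 12-4-1=7$ unconditionally, and that is all the paper uses.

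\textbf{Remaining steps.} First, note that $-Dt+(D+5)t^2=-7t+12t^2+(D-7)(t^2-t)$ with $t^2-t<0$. So you may set $D=7$ without knowing the rank exactly. Second, you must exhibit an explicit $t$, and your diagnosis of the binding constraint is off. The constraint is not $t\lesssim M^{-1/32}$: with the crude rigorous count $M\le 10C$ (at most five $k$-primes over each rational prime below $C$, at most two $F$-primes over each), the term $Mt^{32}$ is of order $10^{-10}$ at the relevant $t$. The binding constraint is the low-degree part. The quadratic $(1-3t)(1-4t)$ is negative only on $(1/4,1/3)$, with minimum $-1/48$, and $3t^4+20t^8$ cancels almost all of it. At $t=7/25$ the total is about $-4.7\cdot10^{-6}$, which the paper certifies as an exact rational. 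So the margin is razor-thin, and the precise value $D\ge7$ matters.

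No ray-class refinement from Hajir--Maire--Ramakrishna is needed. The constants $Y,\delta,c_*$ play no role in this proposition; only $C$ enters.
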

\begin{proof}
Let $G$ be the Galois group of the maximal everywhere unramified
pro-$2$ extension of $F$, and let $D$ and $r$ be its generator and
relation ranks. Genus theory gives $D\ge7$:
the extension $F/k$ has nine finite and three real ramified places,
and the bound in \cite[Theorem~2.5]{HM}, with the four archimedean
places of $k$, is $12-4-1=7$. The Shafarevich bound
\cite[Theorem~2.2]{HM} gives
\[
 r-D\le |S_\infty(F)|-1+1=5.
\]
The final $1$ is essential: $F$ contains $\mu_2$ and the finite
ramification set is empty.

For each prime in the statement impose the indicated power of a
Frobenius element, taking the closed normal closure of these relations.
The complete set of primes is stable under $\Gal(F/k)$: the ramified
primes are fixed, the split pairs are included together, and the
norm cutoff is imposed on $k$-primes. Thus this closed normal subgroup
is also normal in the Galois group over $k$.

There are at most $5C$ primes of $k$ of norm less than $C$:
each lies above a rational prime less than $C$, and there are at most
five primes above each such rational prime. Each has at most two
extensions to $F$, giving at most $10C$ further relations.
In a free pro-$2$ presentation a $2^a$-th power has Zassenhaus depth
at least $2^a$ \cite[\S1.1 and \S1.4]{HMR}. The original minimal
relations have depth at least two. Therefore, for $0<t<1$, an upper
bound for the Golod--Shafarevich polynomial of the quotient is
\begin{align*}
 1-Dt+rt^2+3t^4+20t^8+10Ct^{32}
 &\le 1-7t+12t^2+3t^4+20t^8+10Ct^{32}.
\end{align*}
Here the difference in the degree-two part is at most
$(D-7)(t^2-t)\le0$, so equality of the generator rank with $7$
is not assumed.
At $t=7/25$ the last expression is
\[
 -\frac{511272996594583443171586679224792442223}
 {108420217248550443400745280086994171142578125}<0.
\]
The Golod--Shafarevich criterion \cite[\S1.2]{HMR} proves infinitude.
Taking a quotient adds no ramification, and the stated orders follow
from the imposed relations.
\end{proof}

Fix a real embedding of $k$ and an algebraic closure in $\C$.
Complex conjugation induces an involution $c$ of $K_\infty/k$,
nontrivial already on $F$. Choose nested finite subextensions
$K_j/k$, Galois over $k$, containing $F$, and with degrees tending
to infinity, and set $L_j=K_j^{\langle c\rangle}$.
Then
\begin{equation}\label{arith:descent}
 L_j\subset\R,\qquad d_j=[L_j:\Q]=\tfrac12[K_j:\Q]\longrightarrow\infty,
 \qquad \rd(L_j)\le|\Delta_F|^{1/10}<\delta.
\end{equation}
Moreover $L_j$ contains $k$, and each real embedding restricts to one
of its three real embeddings. Thus
\begin{equation}\label{arith:signature}
 0<r_1(L_j)/d_j\le3/5.
\end{equation}
For the finite set of $k$-primes of norm less than $C$, the residue
degrees in the nested Galois layers $K_j/F$ eventually stabilize:
they are nondecreasing under divisibility and divide $32$. A split
pair has a common residue degree, since $K_j/k$ is Galois.
Discard finitely many layers so that these degrees are constant.

We index a family by each $k$-prime $\mathfrak p$ of norm $q<C$.
Write $f_\mathfrak p$ for this stabilized degree over $F$ and define
\begin{equation}\label{arith:families}
 (Q_\mathfrak p,a_\mathfrak p)=
 \begin{cases}
 (q,2),&\mathfrak p\text{ splits in }F/k,\\
 (q,1),&\mathfrak p\text{ ramifies in }F/k,\\
 (q^2,1),&\mathfrak p\text{ is inert in }F/k.
 \end{cases}
\end{equation}
The contraction to $L_j$ of the $K_j$-primes in a family produces at
least $a_\mathfrak p d_j/(10f_\mathfrak p)$ distinct $L_j$-primes,
all of norm at most $Q_\mathfrak p^{f_\mathfrak p}$.
Indeed $K_j/F$ is unramified and has
$a_\mathfrak p[K_j:F]/f_\mathfrak p$ primes in this family, while
at most two primes contract to one prime of $L_j$.
Families indexed by different $k$-primes are disjoint.
In particular a split pair must be treated as one family, since
its two parts can have the same contraction.
We may select any prescribed number of these primes up to this
lower bound.

\section{A signed Euler bound for the class number and regulator}
\label{sec:regulator}

For a prime power $u$, let $N_u(L)$ denote the number of finite primes
of $L$ having absolute norm $u$.
After a diagonal subsequence, assume that the root discriminants,
signature proportions, and all degree-normalized prime counts converge.
Put
\[
 \delta_L=\lim_j\rd(L_j),\quad
 \mu=\lim_jr_1(L_j)/d_j,\quad
 \nu=(1-\mu)/2,\quad
 x_u=\lim_jN_u(L_j)/d_j .
\]
The root discriminants are bounded below by $\rd(k)>1$, so this
degree normalization is equivalent to genus normalization.
For every integer $u\ge2$ define
\[
 a(u)=2\log u\sum_{n\ge1}(u^n+1)^{-1},\qquad
 b(u)=\log\frac{u}{u-1},\qquad
 V(u)=b(u)-c_*a(u).
\]
Let $z_+=\max(z,0)$.
The unconditional basic inequality and generalized Brauer--Siegel
upper bound \cite[Proposition~3.1 and Theorem~7.1]{TV} give
\begin{align}
 \sum_u x_u a(u)
 &\le\tfrac12\log\delta_L-\tfrac12(\gamma_E+\log(4\pi))
              -\tfrac12\mu,\label{arith:basic}\\
 \limsup_j\frac{\log(h_{L_j}R_{L_j})}{d_j}
 &\le\tfrac12\log\delta_L-\mu\log2-\nu\log(2\pi)
                  +\sum_u x_u b(u).\label{arith:BS}
\end{align}
The sums indexed by $u$ run over prime powers. No equality form of
Brauer--Siegel, normality over $\Q$, or GRH is used.
The sums converge: $b(u)/a(u)$ is bounded, and
$|V(u)|\le b(u)+c_*a(u)$.

\subsection{The permutation comparison}
For one layer put
\[
 \mathcal G=\Gal(K_j/k),\quad H=\langle c\rangle,\quad
 h=\frac{|C_{\mathcal G}(c)|}{|\mathcal G|}.
\]
A coset $gH$ is fixed by $c$ precisely when $g^{-1}cg=c$.
Above the fixed real embedding of $k$ there are
$|C_{\mathcal G}(c)|/2$ real embeddings of $L_j$, while
$d_j=5|\mathcal G|/2$. Consequently
\begin{equation}\label{arith:centralizer}
 h\le5\,r_1(L_j)/d_j.
\end{equation}

Let $\mathfrak p$ be a $k$-prime of norm $q$, with decomposition and
inertia groups $\mathcal D,\mathcal I$ in $\mathcal G$, and write
$e=|\mathcal I|$, $f'=|\mathcal D/\mathcal I|$.
For a coset $gH$ the stabilizer
$J=\mathcal D\cap gHg^{-1}$ is trivial or has order two.
The corresponding ramification index and residue degree in $L_j/k$
are
\[
 \frac e{|\mathcal I\cap J|},\qquad
 \frac{f'|\mathcal I\cap J|}{|J|}.
\]
Let $p_R$ and $p_I$ be the proportions of cosets for which
$J\ne1$ lies outside or inside $\mathcal I$, respectively.
Thus the three possibilities have $(e_L,f_L)$ equal to
$(e,f'),(e,f'/2),(e/2,f')$.
For any real weight $W$, including a signed weight, the contribution
of this $k$-prime to the degree-normalized prime sum in $L_j$ is
\begin{equation}\label{arith:orbit}
 \frac{(1-p_R-p_I)W(q^{f'})
       +2p_RW(q^{f'/2})+2p_IW(q^{f'})}{5ef'}.
\end{equation}
The middle term is absent when $f'=1$.
To verify the coefficient, an orbit with ramification index $e_L$
and residue degree $f_L$ contains $e_Lf_L$ embeddings, so its
prime count per $[L_j:k]$ is the corresponding coset proportion
divided by $e_Lf_L$.

Every conjugate of $c$ fixes a fraction $h$ of the cosets.
Fixed sets for two distinct conjugates are disjoint because a coset
stabilizer has order two. It follows that
\[
 p_R=h\,\#\{x\in\mathcal D\setminus\mathcal I:
                   x\text{ is conjugate to }c\},\quad
 p_I=h\,\#\{x\in\mathcal I:x\text{ is conjugate to }c\}.
\]
Since $\mathcal D/\mathcal I$ is cyclic, its order-two element
has at most $e$ lifts, giving $p_R\le eh$.
Moreover, $\Gal(K_j/F)$ is a normal subgroup of index two in
$\mathcal G$, and $c$ has nontrivial image in the quotient. Hence
every conjugate of $c$ also has nontrivial image there, so no
conjugate of $c$ belongs to $\Gal(K_j/F)$.

If $\mathfrak p$ splits in $F/k$, its decomposition group is contained
in $\Gal(K_j/F)$, so $p_R=p_I=0$.
Its contribution for $W=V$ is exactly
$2V(q^{f_\mathfrak p})/(10f_\mathfrak p)$.
If it is inert, $e=1$, $f'=2f_\mathfrak p$ and $p_I=0$.
Its contribution is at most
\[
 \frac{V(q^{2f_\mathfrak p})}{10f_\mathfrak p}
       +\frac{r_1(L_j)}{d_j}\,
               \mathcal C^{\mathrm{in}}_{q,f_\mathfrak p},
 \qquad
 \mathcal C^{\mathrm{in}}_{q,f}
   =\frac{(2V(q^f)-V(q^{2f}))_+}{2f}.
\]
Finally, if it ramifies in $F/k$, then $e=2$, $f'=f_\mathfrak p$,
$p_I\le h$ and $p_R\le2h$. Recall that $f_\mathfrak p\mid32$;
hence $f_\mathfrak p>1$ implies that $f_\mathfrak p$ is even.
Its contribution is at most
\[
 \frac{V(q^{f_\mathfrak p})}{10f_\mathfrak p}
       +\frac{r_1(L_j)}{d_j}\,
                \mathcal C^{\mathrm{ram}}_{q,f_\mathfrak p},
\]
where
\begin{equation}\label{arith:ramcorrection}
 \mathcal C^{\mathrm{ram}}_{q,1}=\tfrac12V(q)_+,\qquad
 \mathcal C^{\mathrm{ram}}_{q,f}
 =\frac{2(2V(q^{f/2})-V(q^f))_++V(q^f)_+}{2f}
 \quad(f>1).
\end{equation}
These comparisons preserve the signed baseline $V(Q^f)$.
Only the excess caused by a nontrivial stabilizer is replaced by
its positive part.

\subsection{The cutoff and the joint bound}
The elementary estimates
\[
 b(u)\le\frac1{u-1},\qquad
 \frac{2\log u}{u+1}\le a(u)\le\frac{2\log u}{u-1}
\]
show that
\begin{equation}\label{arith:cutoff}
 V(u)<0,\qquad 2V(u)-V(u^2)<0\qquad(u\ge C).
\end{equation}
For the second assertion an upper bound is
\[
 \frac{2}{u-1}
  \left(1-2c_*\log u\,\frac{u-2}{u+1}\right).
\]
A direct rational interval computation gives
$2c_*\log C>(C+1)/(C-2)$, and the product
$\log u\,(u-2)/(u+1)$ is increasing for $u\ge2$.
The first assertion follows from the weaker threshold
$(u+1)/(u-1)$.

For each family of \eqref{arith:families} set
\[
 v_\mathfrak p=\frac{a_\mathfrak p}{10f_\mathfrak p}
                    V(Q_\mathfrak p^{f_\mathfrak p}),\qquad
 \mathcal C_\mathfrak p=
 \begin{cases}
 0,&\mathfrak p\text{ splits},\\
 \mathcal C^{\mathrm{in}}_{q,f_\mathfrak p},&\mathfrak p\text{ is inert},\\
 \mathcal C^{\mathrm{ram}}_{q,f_\mathfrak p},&\mathfrak p\text{ ramifies}.
 \end{cases}
\]
All sums in the following proposition are finite.

\begin{proposition}\label{arith:hR}
Put
\[
 U=\frac12\log\frac{\delta}{4\pi}
       +\frac{c_*}{2}\bigl(\log\delta-\gamma_E-\log(4\pi)\bigr).
\]
Then $U<0.972098863$, and
\begin{equation}\label{arith:hRbound}
 \limsup_j\frac{\log(h_{L_j}R_{L_j}/2^{r_2(L_j)})}{d_j}
 \le U+\sum_{\NN\mathfrak p<C}v_\mathfrak p
       +\mu\left(\tfrac12\log\pi-\tfrac12c_*
                            +\sum_{\NN\mathfrak p<C}\mathcal C_\mathfrak p\right).
\end{equation}
\end{proposition}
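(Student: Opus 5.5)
The plan is to take the Brauer--Siegel bound \eqref{arith:BS} and add to it $c_*$ times the slack in the basic inequality \eqref{arith:basic}. Since $c_*>0$ and \eqref{arith:basic} gives
\[
0\le c_*\Bigl(\tfrac12\log\delta_L-\tfrac12(\gamma_E+\log(4\pi))-\tfrac12\mu-\sum_u x_ua(u)\Bigr),
\]
adding this to the right side of \eqref{arith:BS} combines the Euler terms into $\sum_u x_uV(u)$ with $V=b-c_*a$.

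Next we subtract $r_2(L_j)\log2/d_j\to\nu\log2$ and collect the archimedean constants. We have
\[
-\mu\log2-\nu\log(2\pi)-\nu\log2=-\nu\log(4\pi)=-\tfrac12\log(4\pi)+\tfrac12\mu\log(4\pi),
\]
and combined with $-\mu\log2$ this gives $-\tfrac12\log(4\pi)+\tfrac12\mu\log\pi$. Adding the $-\tfrac12c_*\mu$ from the basic inequality gives the displayed $\mu$-coefficient $\tfrac12\log\pi-\tfrac12c_*$. The $\mu$-free remainder is $\tfrac12\log(\delta_L/4\pi)+\tfrac{c_*}2(\log\delta_L-\gamma_E-\log 4\pi)$. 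Since its dependence on $\delta_L$ is through $\tfrac12(1+c_*)\log\delta_L$ with positive coefficient, and $\delta_L\le\delta$ by \eqref{arith:descent}, it is at most $U$. The numerical claim $U<0.972098863$ is a single outward rational interval evaluation of $\log\delta$, $\log(4\pi)$ and $\gamma_E$, as in the appendix.

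It remains to bound $\sum_u x_uV(u)$ by $\sum_{\NN\mathfrak p<C}(v_\mathfrak p+\mu\,\mathcal C_\mathfrak p)$. I would split each count as $N_u(L_j)=N'_u(L_j)+N''_u(L_j)$, according to whether the $L_j$-prime lies over a $k$-prime of norm $<C$ or $\ge C$. After a further diagonal subsequence both normalized counts converge, to $x'_u$ and $x''_u$.
\begin{itemize}
\item For the large part, every $L_j$-prime over a $k$-prime of norm $\ge C$ has norm $u\ge C$. Hence $V(u)<0$ by \eqref{arith:cutoff}, and $\sum_u x''_uV(u)\le0$ can be dropped.
\item For the small part, only finitely many $k$-primes are involved, so $\sum_u x'_uV(u)$ is a finite sum of limits of the per-prime contributions \eqref{arith:orbit} with $W=V$. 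The comparisons of the permutation subsection bound each $k$-prime's contribution by $v_\mathfrak p+(r_1(L_j)/d_j)\,\mathcal C_\mathfrak p$, with residue degrees already stabilized. These are the three cases split, inert and ramified; the two halves of a split pair together give $v_\mathfrak p$.
\end{itemize}
Letting $j\to\infty$ turns $r_1/d_j$ into $\mu$, which yields \eqref{arith:hRbound}.

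The main obstacle is this limit bookkeeping. One has to justify that the $x_u$ in the Tsfasman--Vl\u adu\c t inequalities may be decomposed as $x'_u+x''_u$. One also has to check that the infinite tail is genuinely nonpositive term by term, rather than only in the limit, so that no Fatou-type loss goes in the wrong direction. Because every term $x''_uV(u)$ is nonpositive, dropping it only weakens the bound, which is the safe direction; so I expect this step to go through. I would still write it out carefully, together with the check that the $k$-primes of norm $<C$ contribute only norms with $u<C$ or terms already counted in their family.
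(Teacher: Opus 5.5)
Your proposal is correct and follows the paper's proof. You write $b=c_*a+V$ in \eqref{arith:BS}, absorb $c_*\sum_u x_ua(u)$ using the basic inequality, subtract $\nu\log2$, and replace $\delta_L$ by $\delta$ because the coefficient $(1+c_*)/2$ is positive; you then bound $\sum_u x_uV(u)$ by dropping the nonpositive contribution of $k$-primes of norm at least $C$ and summing the finitely many orbit comparisons, while keeping the negative baselines $v_\mathfrak p$. The only differences are the order of presentation and your explicit split $x_u=x'_u+x''_u$, which the paper leaves implicit.
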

\begin{proof}
Every $L_j$-prime above a $k$-prime of norm at least $C$ has norm
at least $C$, so its contribution for the weight $V$ is nonpositive.
The remaining set of $k$-primes is finite. The orbit comparison can
therefore be summed and passed to the limit, giving
\[
 \sum_u x_uV(u)\le
 \sum_{\NN\mathfrak p<C}v_\mathfrak p+
                 \mu\sum_{\NN\mathfrak p<C}\mathcal C_\mathfrak p.
\]
Here it is legitimate to retain negative values: these are exact
baseline contributions at the finitely many controlled primes.
Using $b=c_*a+V$ in \eqref{arith:BS}, then
\eqref{arith:basic}, and subtracting $\nu\log2$, gives
\eqref{arith:hRbound}. The right side before replacing $\delta_L$
by $\delta$ is increasing in $\log\delta_L$, with coefficient
$(1+c_*)/2>0$, so \eqref{arith:descent} justifies that replacement.
\end{proof}

For later use, if $\Lambda_L$ is the lattice of unweighted unit
logarithms in $\sum_i\kappa_i x_i=0$, with $\kappa_i=1$ at real
places and $2$ at complex places, then
\[
 \covol(\Lambda_L)
   =\frac{\sqrt{r_1+4r_2}}{2^{r_2}}R_L.
\]
Indeed, deleting row $i$ from an unweighted logarithm matrix gives
a minor of absolute value $\kappa_iR_L/2^{r_2}$, and
Cauchy--Binet gives the displayed covolume.
For an $S$-unit assignment lattice the class-group index $J_S$
is at most $h_L$, so the same bound applies to
$J_S\covol(\Lambda_L)$ up to the factor
$\sqrt{r_1+4r_2}=\exp(o(d))$.

\subsection{A reproducible bound for inert corrections}
\label{arith:tail}
The finitely many ramified corrections are retained individually.
It remains to obtain a uniform bound for the inert corrections with
$\NN\mathfrak p<C$. For a safe bound on all possible residue degrees put
\[
 I_{\mathrm{pre}}=
 \sum_{\substack{\mathfrak p\ {\rm inert}\\\NN\mathfrak p<Y}}
       \max_{f\in\{1,2,4,8,16,32\}}\mathcal C^{\mathrm{in}}_{\NN\mathfrak p,f}.
\]
Exact finite-field factorization gives
\begin{equation}\label{arith:prefixbound}
 I_{\mathrm{pre}}<0.317565174395.
\end{equation}
The following explicit tail calculation bounds the remaining primes.
Define
\[
 J(u)=\frac1{u-1}-\frac{2c_*\log u}{u+1}
                         +\frac{2c_*\log u}{u^2-1}.
\]
Then $\mathcal C^{\mathrm{in}}_{q,f}\le J(q^f)_+/f$.
Writing
$J(u)=[1-2c_*\log u\,(u-2)/(u+1)]/(u-1)$ shows that
$J(u)_+$ is nonincreasing on $[2,\infty)$.
Thus every correction for $q\ge Y$ is at most $J(q)_+$.

For rational primes $p\ge Y$, every $k$-prime above $p$ has inert
correction at most $J(p)_+$, and there are at most five such $k$-primes.
Divide $[Y,C)$ into the eight intervals $[L,R)$ with
$L=2^iY$ and $R=\min(2L,C)$.
On each interval, $2c_*\log L<1$ and
\[
 J(p)_+\le\frac1{p-1}-\frac{2c_*\log L}{p+1}
                         +\frac{2c_*\log R}{p^2-1}.
\]
The expression on the right is positive. A sieve enumerates the
rational primes in each interval. With $s=10^{24}$ define
\[
 A=\frac1s\sum_{L\le p<R}\left\lceil\frac{s}{p-1}\right\rceil,\quad
 B=\frac1s\sum_{L\le p<R}\left\lfloor\frac{s}{p+1}\right\rfloor,\quad
 D=\frac1s\sum_{L\le p<R}\left\lceil\frac{s}{p^2-1}\right\rceil.
\]
Using lower and upper rational logarithm bounds $\ell_L$ and $u_R$
gives the bucket upper bound $A-2c_*\ell_LB+2c_*u_RD$.
There are at most five $k$-primes above a rational prime.

One must also include higher-degree primes with rational
$p<Y$ but norm $p^r\ge Y$, for $2\le r\le5$.
Let $n_r$ be the least integer with $n_r^r\ge Y$, and put
\[
 S_{r,a}=n_r^{-a}+\frac{n_r^{1-a}}{a-1}\qquad(a>1).
\]
Dropping the negative term of $J$ and using $\log(q^f)<\log C$
when the correction is nonzero gives the following upper bound for
this part:
\[
 5\sum_{r=2}^5
  \left(\frac{Y}{Y-1}S_{r,r}+
      2c_*\log C\,\frac{Y^2}{Y^2-1}S_{r,2r}\right).
\]
The elementary integral test
$\sum_{n\ge n_r}n^{-a}\le n_r^{-a}+n_r^{1-a}/(a-1)$
justifies this formula. Summing both tail parts with outward rational
rounding gives
\begin{equation}\label{arith:tailbound}
 I_{\mathrm{tail}}<0.410327590,\qquad
 I_{\mathrm{all}}:=I_{\mathrm{pre}}+I_{\mathrm{tail}}
                  <0.727892765.
\end{equation}
This deliberately overcounts primes, and hence remains valid for
the inert subcollection. All numerical endpoints in these estimates
are understood as outward-rounded rational bounds.

\section{A logarithmic volume bound and a sharp two-point moment}
\label{sec:unit-body}
Let $\kappa_i\in\{1,2\}$, $\sum_i\kappa_i=d$, and let $n$ be the
number of coordinates. Put
\[
 \mathcal H=\{x\in\R^n:\textstyle\sum_i\kappa_i x_i=0\}.
\]
We give this hyperplane its Euclidean $(n-1)$-dimensional measure.
The following entropy bound is the archimedean input to the
mixed $S$-unit construction.

Let $\psi=\Gamma'/\Gamma$ denote the digamma function. For $\alpha>0$, set
\[
 B_\alpha=\alpha e^{-\psi(\alpha)},\qquad
 H_\alpha=e^\alpha\Gamma(\alpha)e^{-\alpha\psi(\alpha)}.
\]
Fix $0<\tau\le1$ and $M>B_\alpha$, and define
\begin{equation}\label{geom:unitbody}
 C_{\alpha,\tau,M}=\left\{x\in\mathcal H:
 \sum_i\kappa_i e^{\tau x_i}\le dM\right\}.
\end{equation}
This is a compact convex body in $\mathcal H$. Indeed, the defining function
is convex, each coordinate has an upper bound, and the equation defining
$\mathcal H$ then bounds each coordinate below. Also $B_\alpha>1$, as follows
from strict Jensen's inequality applied to the random variable used in the
next proof. Hence the origin is an interior point relative to $\mathcal H$.

\begin{lemma}\label{geom:entropy}
For fixed $\alpha,\tau,M$ as above,
\[
 \log\vol_{n-1}(C_{\alpha,\tau,M})
 \ge n\log(H_\alpha/\tau)-o(d).
\]
\end{lemma}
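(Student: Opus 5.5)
The plan is an entropy (typical-set) argument with a product measure whose one-dimensional entropy is exactly $\log(H_\alpha/\tau)$ and whose exponential moment is $B_\alpha<M$. Let $Y\sim\Gamma(\alpha,1)$ and $X=(\log Y-\psi(\alpha))/\tau$. Then $\E X=0$ because $\E\log Y=\psi(\alpha)$, and $\E e^{\tau X}=e^{-\psi(\alpha)}\E Y=B_\alpha$. The density of $Z=\log Y$ is $e^{\alpha z-e^z}/\Gamma(\alpha)$, so its differential entropy is $\log\Gamma(\alpha)-\alpha\psi(\alpha)+\alpha=\log H_\alpha$, and that of $X$ is $\log(H_\alpha/\tau)$. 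All of $X$, $e^{\tau X}$ and $\log p_X(X)$ have finite variance, since every power moment of $\log Y$ and of $Y$ is finite. We take $X_1,\dots,X_n$ i.i.d.\ with this law and joint density $p(x)=\prod_i p_X(x_i)$.

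Fix $M'$ with $B_\alpha<M'<M$, put $w=d^{2/3}$ and $\varepsilon_d=d^{-1/3}$. Consider the set
\[
 E=\Bigl\{x\in\R^n:\ \bigl|\textstyle\sum_i\kappa_ix_i\bigr|\le w,\ \textstyle\sum_i\kappa_ie^{\tau x_i}\le dM',\ -\log p(x)\le n(\log(H_\alpha/\tau)+\varepsilon_d)\Bigr\}.
\]
Each defining sum has mean $0$, $dB_\alpha$ and $n\log(H_\alpha/\tau)$ respectively, and variance $O(d)$, because $\kappa_i\le2$ and $d/2\le n\le d$. By Chebyshev, each condition fails with probability $o(1)$, so $\Pr(E)\ge1/2$ for large $d$. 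On $E$ the density satisfies $p\le e^{-n\log(H_\alpha/\tau)-n\varepsilon_d}$. Hence
\[
 \vol_n(E)\ge\tfrac12\,e^{n\log(H_\alpha/\tau)-o(d)}.
\]

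Next we transfer this bound to the hyperplane. For $x\in E$ write $s=\sum_i\kappa_ix_i$ and $y=x-(s/d)\mathbf 1$. Since $\sum_i\kappa_i=d$, we have $y\in\mathcal H$. Moreover
\[
 \sum_i\kappa_ie^{\tau y_i}=e^{-\tau s/d}\sum_i\kappa_ie^{\tau x_i}\le e^{\tau w/d}\,dM'\le dM
\]
for large $d$, because $w/d\to0$. Thus
\[
 E\subset\{y+t\mathbf 1:\ y\in C_{\alpha,\tau,M},\ |t|\le w/d\}.
\]
By Fubini, the right side has volume
\[
 \vol_{n-1}(C_{\alpha,\tau,M})\cdot\frac{2w}{d}\cdot\frac{|\langle\mathbf 1,\kappa\rangle|}{|\kappa|}
 =\vol_{n-1}(C_{\alpha,\tau,M})\cdot\frac{2w}{d}\cdot\frac{d}{|\kappa|},
\]
which is $\vol_{n-1}(C_{\alpha,\tau,M})$ times a factor polynomial in $d$. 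Taking logarithms and absorbing the polynomial factor into $o(d)$ gives the lemma.

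The main point to check carefully is uniformity. The $o(d)$ term must depend only on $\alpha,\tau,M$ and not on the pattern of weights $\kappa_i\in\{1,2\}$. This holds because the variance bounds use only $\kappa_i\le2$ and $n\asymp d$, and because the slack $M-M'$ is fixed while $\tau w/d\to0$. I expect no further obstacle beyond verifying the finiteness of the moments $\operatorname{Var}(\log p_X(X))$ and $\operatorname{Var}(e^{\tau X})$, which is routine for the log-gamma law.
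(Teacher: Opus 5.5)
Your route is the paper's: your $X=(\log Y-\psi(\alpha))/\tau$ is exactly the variable with density $h_\alpha$, and you also use Chebyshev on the three sums, the projection $x\mapsto x-(s/d)\mathbf 1$ onto $\mathcal H$, and the slab factor $d/\|\kappa\|$. The shrinking windows $w=d^{2/3}$, $\varepsilon_d=d^{-1/3}$ with a fixed $M'\in(B_\alpha,M)$ replace the paper's fixed $\zeta$ and double limit, which is harmless.

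However, the density condition in $E$ points the wrong way, so the volume step fails as written. From $-\log p(x)\le n(\log(H_\alpha/\tau)+\varepsilon_d)$ you get $p(x)\ge e^{-n\log(H_\alpha/\tau)-n\varepsilon_d}$. That is a \emph{lower} bound on the density, and together with $\Pr(E)\le1$ it only bounds $\vol_n(E)$ from \emph{above}. The asserted bound $p\le e^{-n\log(H_\alpha/\tau)-n\varepsilon_d}$ on $E$ does not follow. In fact it cannot hold on a set of probability at least $1/2$. The quantity $-\log p(X)$ has mean $n\log(H_\alpha/\tau)$ and standard deviation $O(\sqrt n)$, far smaller than $n\varepsilon_d$. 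Chebyshev therefore gives $\Pr\bigl(-\log p\ge n\log(H_\alpha/\tau)+n\varepsilon_d\bigr)=O(d^{-1/3})\to0$.

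The fix is to impose the opposite tail, $-\log p(x)\ge n(\log(H_\alpha/\tau)-\varepsilon_d)$. By Chebyshev this still fails with probability $O(d^{-1/3})$. On the corrected $E$ you then have $p\le e^{-n\log(H_\alpha/\tau)+n\varepsilon_d}$, so
\[
 \tfrac12\le\Pr(E)\le\vol_n(E)\,e^{-n\log(H_\alpha/\tau)+n\varepsilon_d}.
\]
This yields your displayed lower bound for $\vol_n(E)$. It is exactly the paper's condition $\prod_i h_\alpha(Y_i)\le\exp\{-n(\log(H_\alpha/\tau)-\zeta)\}$. With this correction the rest of your argument goes through unchanged: the containment in the slab over $C_{\alpha,\tau,M}$, the Fubini computation, and the uniformity in the weights $\kappa_i$.
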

\begin{proof}
Put $a_\alpha=e^{\psi(\alpha)}$, and let $Y$ have the density
\[
 h_\alpha(y)=\frac{\tau a_\alpha^\alpha}{\Gamma(\alpha)}
 \exp\{\alpha\tau y-a_\alpha e^{\tau y}\},\qquad y\in\R.
\]
The change of variables $z=a_\alpha e^{\tau y}$ identifies $z$ with a
Gamma random variable of shape $\alpha$ and scale $1$. Consequently
\[
 \E Y=0,\qquad \E e^{\tau Y}=B_\alpha,
 \qquad -\E\log h_\alpha(Y)=\log(H_\alpha/\tau).
\]
All three variables $Y$, $e^{\tau Y}$ and $\log h_\alpha(Y)$ have finite
variance.

Take independent copies $Y_1,\ldots,Y_n$. For any sufficiently small fixed
$\zeta>0$, Chebyshev's inequality shows that the following conditions hold
on a set of probability $1-o(1)$:
\[
 |t|\le\zeta,\qquad t=\frac1d\sum_i\kappa_iY_i,\qquad
 \frac1d\sum_i\kappa_i e^{\tau Y_i}\le B_\alpha+\zeta,
\]
and
\[
 \prod_i h_\alpha(Y_i)
 \le\exp\{-n(\log(H_\alpha/\tau)-\zeta)\}.
\]
For the weighted assertions one uses
$\sum_i\kappa_i^2\le2d$. The corresponding typical set in $\R^n$ therefore
has volume at least
\[
 (1-o(1))\exp\{n(\log(H_\alpha/\tau)-\zeta)\}.
\]
For each point $y$ in this set, put $x=y-t\mathbf 1$. Then $x\in\mathcal H$,
and
\[
 \sum_i\kappa_i e^{\tau x_i}
 \le de^{\tau\zeta}(B_\alpha+\zeta)\le dM
\]
if $\zeta$ is small enough. The map $(x,t)\mapsto x+t\mathbf 1$, with
$x\in\mathcal H$, has Jacobian
$d/\|(\kappa_i)\|$. Since $|t|\le\zeta$, the volume of the typical set is at
most
\[
 2\zeta\,\frac d{\|(\kappa_i)\|}
 \vol_{n-1}(C_{\alpha,\tau,M}).
\]
The logarithm of this factor is $o(d)$. Letting first $d$ tend to infinity
and then $\zeta$ tend to zero proves the assertion.
\end{proof}

\subsection{A two-point moment inequality}\label{geom:momentsection}

A norm-one constraint allows a sharp improvement over applying
subadditivity separately to two vectors. Put
\begin{equation}\label{geom:F}
 F_\tau(M)=\left[(M+\sqrt{M^2-1})^{1/\tau}
 +(M-\sqrt{M^2-1})^{1/\tau}\right]^\tau,
 \qquad M\ge1.
\end{equation}

\begin{lemma}\label{geom:moment}
Suppose $a_i,b_i>0$ and
\[
 \prod_i a_i^{\kappa_i}=\prod_i b_i^{\kappa_i}=1,
 \qquad \sum_i\kappa_i a_i\le dM,
 \qquad \sum_i\kappa_i b_i\le dM.
\]
Then, for $0<\tau\le1$,
\[
 \sum_i\kappa_i(a_i^{1/\tau}+b_i^{1/\tau})^\tau
 \le dF_\tau(M).
\]
The bound is attained when there are an even number of equally weighted
coordinates. In addition, $F_\tau(M)\le2M$.
\end{lemma}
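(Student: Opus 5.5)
The plan is a Lagrangian-type pointwise majorant with a tangent line, checked against the extremal configuration. Put $p=1/\tau\ge1$ and $g(a,b)=(a^p+b^p)^{1/p}$. For each coordinate write
\[
 a_i=r_ie^{w_i},\quad b_i=r_ie^{-w_i},\qquad r_i=\sqrt{a_ib_i},\quad y_i=e^{w_i}+e^{-w_i}\ge2 .
\]
Then $g(a_i,b_i)=r_i\,h(y_i)$, where $h(y)=(e^{pw}+e^{-pw})^{1/p}$ with $y=2\cosh w$, and $a_i+b_i=r_iy_i$. The product hypotheses combine to $\sum_i\kappa_i\log r_i=0$. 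Let $x_0=M+\sqrt{M^2-1}$ and $y_0=x_0+x_0^{-1}=2M$. Then $h(y_0)=F_\tau(M)$, so the target is $\sum_i\kappa_ir_ih(y_i)\le d\,h(y_0)$.

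\emph{Step 1: two structural facts about $h$ on $[2,\infty)$.}
\begin{enumerate}
\item[(i)] $h$ is concave and nondecreasing in $y$.
\item[(ii)] $h(y)/y$ is nondecreasing, i.e.\ the ratio $\|(e^w,e^{-w})\|_p/\|(e^w,e^{-w})\|_1$ increases as the vector becomes less balanced.
\end{enumerate}
Fact (ii) is elementary: differentiate $\log h-\log y$ in $w$ and compare $\tanh(pw)$ with $\tanh w$. Fact (i) is the step I expect to be the main obstacle. I would first try to compute $d^2h/dy^2$ through the parametrisation $y=2\cosh w$, using $h^p=2\cosh(pw)$. The claim reduces to an inequality between $\tanh(pw)$, $\tanh w$ and $\coth w$ for $p\ge1$, which I would try to prove by monotonicity of $\tanh(pw)/\tanh w$. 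As a fallback, only the tangent-line inequality at the single point $y_0$ is actually needed, and this could be verified directly.

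\emph{Step 2: the pointwise majorant.} Set $\alpha=h'(y_0)\ge0$ and $c=\alpha y_0-h(y_0)$. By (ii), $y_0h'(y_0)\ge h(y_0)$, so $c\ge0$. Concavity (i) gives $h(y)\le h(y_0)+\alpha(y-y_0)$, that is, $h(y)\le\alpha y-c$. Multiplying by $r>0$ and using $r\ge1+\log r$ together with $c\ge0$,
\[
 g(a,b)=rh(y)\le\alpha(a+b)-cr\le\alpha(a+b)-c-c\log r .
\]

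\emph{Step 3: sum and verify the remaining claims.} Summing Step 2 with weights $\kappa_i$, the hypothesis $\sum\kappa_i(a_i+b_i)\le2dM$ (using $\alpha\ge0$) and $\sum\kappa_i\log r_i=0$ give
\[
 \sum_i\kappa_i\,g(a_i,b_i)\le 2dM\alpha-dc=d\,(\alpha y_0-c)=d\,h(y_0)=dF_\tau(M).
\]
For attainment, take an even number of coordinates with $\kappa_i$ equal, half with $(a_i,b_i)=(x_0,x_0^{-1})$ and half with $(x_0^{-1},x_0)$. Both products equal $1$, both arithmetic means equal $M$, and every inequality above is an equality, since $r_i=1$ and $y_i=y_0$. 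Finally, $F_\tau(M)\le2M$ follows from $\|\cdot\|_p\le\|\cdot\|_1$ applied to $(x_0,x_0^{-1})$.
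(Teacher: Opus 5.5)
Your proof has the same structure as the paper's. Your $h(y)$ is the paper's $Q(y/2)$. You take the tangent line at $y_0=2M$. Your fact (ii) giving $c\ge0$ is the paper's $\tanh(pv)\coth v\ge1$. Your pointwise $r\ge1+\log r$, summed against $\sum_i\kappa_i\log r_i=0$, is the paper's weighted AM--GM step $\overline r\ge1$.

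\textbf{The gap is fact (i), concavity of $h$.} It is the only non-routine ingredient, you leave it unproved, and the route you sketch does not close it. Carrying out your computation gives
\[
 h'(y)=\frac{h(y)}{y}\cdot\frac{\tanh(pw)}{\tanh w}.
\]
This is the product of the nondecreasing factor from (ii) and a nonincreasing factor, so monotonicity alone cannot decide the sign. Concretely, $h''\le0$ is equivalent to
\[
 \frac{\tanh(pw)}{\tanh w}\ge p-(p-1)\tanh^2(pw)\qquad(w>0),
\]
or equivalently $\sinh(pw)\cosh((p-1)w)\ge p\sinh w$. The ratio $\tanh(pw)/\tanh w$ decreases in $w$ from the value $p$. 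Its monotonicity therefore only gives the upper bound $\tanh(pw)/\tanh w\le p$, which is the wrong direction. What you need is a quantitative lower bound on how slowly it decays.

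Your fallback does not reduce the work either. The lemma is asserted for every $M\ge1$, and requiring the tangent line at every $y_0=2M\ge2$ to majorize $h$ is exactly concavity.

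\textbf{Two ways to fill the gap.} The paper writes $2\sinh(pw)\cosh((p-1)w)=\sinh((2p-1)w)+\sinh w$. It then uses $\sinh((2p-1)w)\ge(2p-1)\sinh w$, which holds because $\sinh$ is convex on $[0,\infty)$ with $\sinh0=0$.

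Alternatively, in your own terms, use monotonicity in $p$ rather than in $w$. Put
\[
 G(p)=\frac{\tanh(pw)}{\tanh w}-p+(p-1)\tanh^2(pw).
\]
Then $G(1)=0$ and
\[
 G'(p)=\operatorname{sech}^2(pw)\Bigl[\frac{w}{\tanh w}-1+2(p-1)w\tanh(pw)\Bigr]\ge0,
\]
so $G(p)\ge0$ for $p\ge1$, which is the required inequality.

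With either argument inserted, the rest of your proof is complete. At $M=1$, either use the one-sided derivative at the endpoint $y=2$, or note that the hypotheses then force $a_i=b_i=1$.
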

\begin{proof}
For any family $(x_i)$ write
\[
 \bar x:=\frac1d\sum_i\kappa_i x_i .
\]
Thus all averages below use the weights $\kappa_i/d$. If $M=1$,
equality in the arithmetic--geometric mean inequality forces every $a_i$ and $b_i$ to equal
$1$, and the claim follows. If $\tau=1$, the assertion is just the sum of the
two arithmetic-mean bounds, since $F_1(M)=2M$. We may therefore assume
$M>1$ and $0<\tau<1$.

Write
\[
 r_i=\sqrt{a_ib_i},\qquad t_i=\frac{a_i+b_i}{2r_i}\ge1,
 \qquad
 Q(t)=2^\tau\cosh\bigl(\tau^{-1}\operatorname{arcosh}t\bigr)^\tau.
\]
The function $Q$ is increasing and concave on $[1,\infty)$.
Indeed, for $t=\cosh v>1$ and $p=1/\tau>1$,
\[
 \frac{Q'(t)}{Q(t)}=\frac{\tanh(pv)}{\sinh v}>0.
\]
For concavity, at $t>1$ the inequality $Q''(t)\le0$ is equivalent to
\[
 \sinh(pv)\cosh((p-1)v)\ge p\sinh v.
\]
This follows from
\[
 2\sinh(pv)\cosh((p-1)v)
 =\sinh((2p-1)v)+\sinh v\ge2p\sinh v,
\]
using convexity of $\sinh$ on $[0,\infty)$. The extension at $t=1$ is
continuous and has $Q'(1)=2^\tau/\tau$.

The tangent line to $Q$ at $M$ has the form $ct-k$, where $c=Q'(M)>0$ and
$k=MQ'(M)-Q(M)\ge0$. The last inequality follows, on writing
$M=\cosh v$, from $\tanh(pv)\coth v\ge1$. Since
\[
 \overline r\ge1,\qquad \overline{rt}
 =\frac12(\overline a+\overline b)\le M,
\]
we obtain
\[
 \overline{rQ(t)}\le c\overline{rt}-k\overline r
 \le cM-k=Q(M)=F_\tau(M).
\]
The left side is the required moment divided by $d$.

For the equality example, take $a=M+\sqrt{M^2-1}$ and
$b=M-\sqrt{M^2-1}$ on half the equally weighted coordinates, and interchange
$a,b$ on the other half. Finally, subadditivity of $x\mapsto x^\tau$ gives
$F_\tau(M)\le M+\sqrt{M^2-1}+M-\sqrt{M^2-1}=2M$.
\end{proof}

\section{The outer parallel body}
\label{subsec:vertex-packing}

We use Lebesgue measure on the Minkowski space
$\R^{r_1}\times\C^{r_2}$, with ordinary planar measure on each complex
coordinate. Let $\kappa_i=1$ on a real coordinate and $\kappa_i=2$
on a complex coordinate, so that $\sum_i\kappa_i=d$.
For $0<\tau\le1$, $S>0$, and a collection $I$ of coordinates, put
\[
 \mathcal B_{\tau,I}(Y)
 =\left\{z:\sum_{i\in I}\kappa_i|z_i|^\tau\le Y\right\},
 \qquad
 \mathcal Q_I=\mathcal B_{1,I}(d/2).
\]
An empty-coordinate body has volume $1$.
We omit the subscript when all coordinates are present.

\begin{lemma}[Outer parallel volume]\label{lem:vertex-packing}\label{geom:outer-parallel}
Fix $\tau\in(0,1]$ and $S>0$. Set
\begin{align*}
 A_1&=\frac{\log(S\tau)}{\tau}
          +\log\bigl(2\Gamma(1+1/\tau)\bigr),\\
 A_2&=\frac{\log(S\tau/2)}{\tau}
          +\frac12\log\Gamma(1+2/\tau)+\frac32\log2,\\
 E_\tau(S)&=2\exp\left(-\frac12\min\{A_1,A_2\}\right).
\end{align*}
As $d=r_1+2r_2\to\infty$,
\begin{equation}\label{eq:vertex-volume}
 \log\vol\bigl(\mathcal B_\tau(dS)+\mathcal Q\bigr)
 \le \log\vol\mathcal B_\tau(dS)+dE_\tau(S)+O_{\tau,S}(\log(d+2)).
\end{equation}
The implied constant is independent of the signature.
\end{lemma}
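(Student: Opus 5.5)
The plan is to cover $\mathcal B_\tau(dS)+\mathcal Q$ by finitely many explicit strata, bound each stratum by a product of an $\ell^1$ volume and an $\ell^\tau$ volume, and then sum over strata. The sum is dominated by a single exponential, and its optimum produces the square-root shape $2e^{-A/2}$ appearing in $E_\tau(S)$.

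\emph{Step 1: reduction to a minimization.} A point $w$ lies in $\mathcal B_\tau(dS)+\mathcal Q$ exactly when there is a radial profile $\rho=(\rho_i)$ with
\[
 \rho_i\ge0,\qquad \sum_i\kappa_i\rho_i\le d/2,\qquad
 \sum_i\kappa_i(|w_i|-\rho_i)_+^\tau\le dS .
\]
To see this, shrink each $w_i$ radially by $\min(\rho_i,|w_i|)$. The map $\rho_i\mapsto(|w_i|-\rho_i)_+^\tau$ is concave on $[0,|w_i|]$, since $0<\tau\le1$. It is therefore enough to minimize the concave function $\sum_i\kappa_i(|w_i|-\rho_i)^\tau$ over the polytope
\[
 0\le\rho_i\le|w_i|,\qquad \sum_i\kappa_i\rho_i\le d/2 .
\]
The minimum of a concave function on a polytope is attained at a vertex. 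At a vertex every coordinate has $\rho_i\in\{0,|w_i|\}$, except at most one exceptional coordinate $j$ with $0<\rho_j<|w_j|$. Hence $\mathcal B_\tau(dS)+\mathcal Q$ is contained in the union, over disjoint sets $T$ (fully absorbed coordinates) and optional $j\notin T$, of the strata
\[
 \Sigma_{T,j}=\Bigl\{\sum_{i\in T}\kappa_i|w_i|+\kappa_j\rho_j\le\tfrac d2,\ \
 \sum_{i\notin T\cup\{j\}}\kappa_i|w_i|^\tau+\kappa_j(|w_j|-\rho_j)^\tau\le dS\Bigr\}.
\]

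\emph{Step 2: volume of one stratum.} Use Fubini on each $\Sigma_{T,j}$. The $T$-coordinates lie in $\mathcal Q_T$. The remaining coordinates, after translating $w_j$ radially by at most $d/2$, lie in an $\ell^\tau$ body of radius $dS$. The single coordinate $j$ contributes at most a polynomial factor, namely $O(d^2)$ times the planar or linear measure of an annulus. Its effect is therefore absorbed in $O(\log(d+2))$ after summing over the $O(n)$ choices of $j$. This yields
\[
 \vol\Sigma_{T,j}\le \mathrm{poly}(d)\,\vol\mathcal Q_T\cdot\vol\mathcal B_{\tau,T^c}(dS).
\]
Both factors are explicit Dirichlet-type Gamma integrals. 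Suppose $T$ contains $m_1$ real and $m_2$ complex coordinates, and write $m=m_1+2m_2$. Then
\[
 \vol\mathcal Q_T=\frac{2^{m_1}(2\pi)^{m_2}(d/2)^{m}}{2^{2m_2}\,m!}
\]
up to the exact constant. The ratio $\vol\mathcal B_{\tau,T^c}(dS)/\vol\mathcal B_\tau(dS)$ is a quotient of $\Gamma(1+\cdot/\tau)$ factors at total degree $m$. Stirling's formula, with remainders uniform in the signature, turns it into
\[
 e^{-m_1A_1-2m_2A_2}\,\frac{\Gamma(1+m/\tau)\cdots}{(d\,\cdot)^{m/\tau}}.
\]
The per-coordinate constants $A_1,A_2$ are exactly the logarithmic scale of one real coordinate, respectively one complex coordinate per real dimension, of $\mathcal B_\tau(dS)$ relative to $d$.

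\emph{Step 3: summation over strata.} Multiply by the number of choices $\binom{r_1}{m_1}\binom{r_2}{m_2}\le r_1^{m_1}r_2^{m_2}/(m_1!\,m_2!)$ and collect the powers of $d$. The $d^{m}$ from $\vol\mathcal Q_T$ and the $d$-dependence of the Gamma ratio should cancel to leading order. What remains is a sum bounded by
\[
 \sum_{m_1,m_2}\frac{(r_1 d\,e^{-A_1}/4)^{m_1}}{(m_1!)^2}\,
 \frac{(r_2d\,e^{-2A_2}\cdots)^{m_2}}{(m_2!)^2\cdots}.
\]
Using $\sum_m x^m/(m!)^2\le e^{2\sqrt x}$, the total is at most
\[
 \exp\Bigl(2\sqrt{r_1d\,e^{-A_1}/4}+2\sqrt{r_2 d\,e^{-2A_2}/\cdots}\Bigr)\cdot\mathrm{poly}(d).
\]
Bound each $e^{-A_k}$ by $e^{-\min\{A_1,A_2\}}$ and apply concavity of the square root together with $r_1+2r_2=d$. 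This bounds the exponent by $d\cdot2e^{-\min\{A_1,A_2\}/2}=dE_\tau(S)$, and the polynomially many strata contribute only $O_{\tau,S}(\log(d+2))$.

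\emph{Main obstacle.} The hardest step is making Step 3 precise uniformly in the signature. Two things must be checked carefully. First, the $d^m$ factors must cancel, or leave a favourable residual, for all $m$ up to $n$ and not only for $m=o(d)$. Second, the Stirling errors must stay summable. I would first try the crude bound $\Gamma(x+y)/\Gamma(x)\ge x^y$ in the direction that is favourable, to avoid asymptotics. I would then check that the large-$m$ terms are dominated by the $(m!)^{-2}$ decay, so that only $m=O(\sqrt d)$ matters. There the Gamma ratios are $e^{O(m^2/d)}=e^{O(1)}$, which is harmless.
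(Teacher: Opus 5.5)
Your Steps 1 and 2 coincide with the paper's proof. Both use the same concave minimization over $\{0\le\rho_i\le|w_i|,\ \sum_i\kappa_i\rho_i\le d/2\}$, the same vertex structure with at most one exceptional coordinate, and the same covering by products of a projected $\ell^1$ body and a projected $\ell^\tau$ body. The gap is in Step 3, where $E_\tau(S)$ has to come from: the stratum sum does not have the product form you display.

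Per real coordinate of $T$, $\vol\mathcal Q_T$ contributes $2\cdot\frac d2=d$ and the $\ell^\tau$ ratio contributes $e^{-A_1}$. Per complex coordinate they contribute $\frac\pi2(\frac d2)^2$ and $2^{2/\tau}(S\tau)^{-2/\tau}/(\pi\Gamma(1+2/\tau))$, whose product is $d^2e^{-2A_2}$. Moreover $\vol\mathcal Q_T$ carries one joint factorial. Up to a factor $e\cdot\mathrm{poly}(d)$, the quantity to bound is
\[
 \sum_{m_1,m_2}\binom{r_1}{m_1}\binom{r_2}{m_2}
 \frac{(de^{-A_1})^{m_1}\,(d^2e^{-2A_2})^{m_2}}{(m_1+2m_2)!}.
\]
So the real variable is $r_1de^{-A_1}$, not $r_1de^{-A_1}/4$. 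The complex coordinates do not give a Bessel-type sum in a variable linear in $d$.

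Suppose you separate the two kinds of coordinates, via $(m_1+2m_2)!\ge m_1!\,(2m_2)!$ and $\binom rm\le r^m/m!$. The complex factor becomes $\sum_m y^m/(m!\,(2m)!)$ with $y=r_2d^2e^{-2A_2}$, whose exponent is $\frac32 d(1-\mu)^{1/3}e^{-2A_2/3}$. Added to the real exponent $2d\sqrt\mu\,e^{-A_1/2}$, this exceeds $2de^{-\min\{A_1,A_2\}/2}$ when $A_1\le A_2$ and $\mu$ is slightly below $1$. So your concavity step cannot deliver $E_\tau(S)$ uniformly in the signature.

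The missing idea is to merge the two kinds of coordinates by real dimension before summing. Since $\binom{r_2}{m_2}\le\binom{2r_2}{2m_2}$, Vandermonde's identity gives $\sum_{m_1+2m_2=k}\binom{r_1}{m_1}\binom{r_2}{m_2}\le\binom dk$. With $A=\min\{A_1,A_2\}$, the sum is then at most
\[
 \sum_k\binom dk\frac{(de^{-A})^k}{k!}
 \le\sum_k\frac{(d^2e^{-A})^k}{(k!)^2}
 \le e^{2de^{-A/2}}=e^{dE_\tau(S)}.
\]
This is the combinatorial form of the paper's step $\mu h_{\rm bin}(\theta_1)+\frac{1-\mu}2h_{\rm bin}(\theta_2)\le\mu h_{\rm bin}(\theta_1)+(1-\mu)h_{\rm bin}(\theta_2)\le h_{\rm bin}(\rho)$. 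The paper then writes a Stirling rate for each stratum type and maximizes $\rho(2-A)-2\rho\log\rho$, rather than summing a generating function.

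Your treatment of the obstacle also needs correcting. The dominant terms have $k\approx de^{-A/2}$, which is linear in $d$, so you cannot restrict to $m=O(\sqrt d)$. What you need is the upper bound $\Gamma(1+x)/\Gamma(1+x-y)\le(1+x)^y$ for $0\le y\le x$, which follows from log-convexity of $\Gamma$ and $\psi(1+x)<\log(1+x)$. It gives
\[
 \frac{\vol\mathcal B_{\tau,T^c}(dS)}{\vol\mathcal B_\tau(dS)}
 \le e\,(S\tau)^{-m/\tau}\prod_{i\in T}
 \frac{\kappa_i^{\kappa_i/\tau}}{v_{\kappa_i}\Gamma(1+\kappa_i/\tau)}
 \qquad(m=m_1+2m_2\le d).
\]
The inequality $\Gamma(x+y)/\Gamma(x)\ge x^y$ that you cite goes in the unfavourable direction. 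With these two repairs, your generating-function route becomes a complete, nonasymptotic alternative to the paper's entropy calculation.
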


\begin{proof}
Write a point of the Minkowski sum as $z=u+v$, and put $r_i=|z_i|$.
The triangle inequality implies that
$q_i=(r_i-|u_i|)_+$ satisfies
\[
 0\le q_i\le r_i,\qquad
 \sum_i\kappa_iq_i\le d/2,\qquad
 \sum_i\kappa_i(r_i-q_i)^\tau\le dS.
\]
Minimize the last, concave function on the compact polytope
\[
 \mathcal P_r=\left\{q:0\le q_i\le r_i,\
                              \sum_i\kappa_iq_i\le d/2\right\}.
\]
A minimum occurs at a vertex: expressing any point as a convex
combination of vertices and applying concavity proves this assertion.
Since the feasible $q$ obtained above has objective at most $dS$, a
minimizing vertex also has objective at most $dS$.
A vertex has at most one coordinate strictly between $0$ and $r_i$.
Indeed, if two coordinates were interior, sufficiently small opposite
weighted perturbations would preserve the budget and all box constraints.
This would express the point as the midpoint of two distinct points of
the polytope. If the budget is inactive, a single interior coordinate
already contradicts extremality.

Let $J$ consist of the coordinates with $q_i=r_i$, and let $\ell$ be
the possible exceptional coordinate. Then
\[
 \sum_{i\in J}\kappa_i r_i\le d/2,\qquad
 \sum_{i\notin J\cup\{\ell\}}\kappa_i r_i^\tau\le dS.
\]
For the exceptional coordinate,
\[
 r_\ell\le (dS/\kappa_\ell)^{1/\tau}+d/(2\kappa_\ell).
\]
We may also designate an arbitrary exceptional coordinate when all
coordinates are at endpoints; deleting that coordinate from either
budget preserves both inequalities. Thus the parallel body is covered
by a union, over $\ell$ and $J$, of products of a projected
$\ell^1$ body, a projected $\ell^\tau$ body, and one real interval or
complex disc of radius $O_{\tau,S}(d^{1/\tau}+d)$.
The last factor has logarithmic volume $O_{\tau,S}(\log(d+2))$.
Allowing arbitrary arguments in the complex coordinates yields radial
supersets and is valid for an upper bound.

If $D_I=\sum_{i\in I}\kappa_i$, polar integration and the Dirichlet
integral give the exact formula
\begin{equation}\label{eq:projected-volume}
 \vol\mathcal B_{\tau,I}(Y)
 =\frac{Y^{D_I/\tau}}{\Gamma(1+D_I/\tau)}
       \prod_{i\in I}
       \frac{v_{\kappa_i}\Gamma(1+\kappa_i/\tau)}
            {\kappa_i^{\kappa_i/\tau}},
 \qquad v_1=2,\quad v_2=\pi .
\end{equation}
For $D_I=0$ it is interpreted as $1$.
Stirling's estimate has error $O_\tau(\log(d+2))$ uniformly over
the integers $0\le D_I\le d$; the zero case is taken separately.
Changing either projected dimension by at most $2$ changes the resulting
main term by $O_{\tau,S}(\log(d+2))$. Consequently the exceptional
coordinate can be omitted from the following exponential-rate calculation.

Put $\mu=r_1/d$. If $J$ contains a proportion $\theta_1$ of the real
coordinates and a proportion $\theta_2$ of the complex coordinates, put
\[
 \rho=\mu\theta_1+(1-\mu)\theta_2,\qquad
 h_{\rm bin}(x)=-x\log x-(1-x)\log(1-x).
\]
We use the continuous convention $0\log0=0$, and all expressions
involving $\rho$ or $1-\rho$ at the endpoints are interpreted by
continuous extension.
The number of choices of $J$ with these proportions has logarithmic rate
at most
$\mu h_{\rm bin}(\theta_1)+(1-\mu)h_{\rm bin}(\theta_2)/2$.
This follows directly from
$\binom nk\le\exp(nh_{\rm bin}(k/n))$; no nonuniform endpoint
approximation is required.
Define
\[
 b_1=\log\bigl(2\Gamma(1+1/\tau)\bigr),\quad
 b_2=\frac12\log\bigl(\pi\Gamma(1+2/\tau)\bigr)
                                 -\frac{\log2}{\tau},
 \qquad
 q_1=\log2,\quad q_2=\frac12\log(\pi/2).
\]
Formula \eqref{eq:projected-volume} bounds the logarithmic volume of the
corresponding union stratum, divided by $d$, by
\begin{align*}
 V_\mu(\theta_1,\theta_2)
 ={}&\frac{1-\rho}{\tau}
       \left(\log\frac{S\tau}{1-\rho}+1\right)
    +\mu(1-\theta_1)b_1+(1-\mu)(1-\theta_2)b_2\\
 &+\rho\left(\log\frac1{2\rho}+1\right)
    +\mu\theta_1q_1+(1-\mu)\theta_2q_2\\
 &+\mu h_{\rm bin}(\theta_1)
                   +\frac{1-\mu}{2}h_{\rm bin}(\theta_2)
    +O_{\tau,S}\!\left(\frac{\log(d+2)}d\right).
\end{align*}
There are at most $(r_1+1)(r_2+1)d$ strata, including choices of the
exceptional coordinate, so summing their volumes contributes only another
$O(\log(d+2))$ to the logarithm.
The volume rate of the original body is
\[
 V_0=\frac{\log(S\tau)+1}{\tau}+\mu b_1+(1-\mu)b_2.
\]

Let $M_*=\max\{q_1-b_1,q_2-b_2\}$.
Concavity and nonnegativity of binary entropy show that
\[
 \mu h_{\rm bin}(\theta_1)
       +\frac{1-\mu}{2}h_{\rm bin}(\theta_2)
 \le \mu h_{\rm bin}(\theta_1)+(1-\mu)h_{\rm bin}(\theta_2)
 \le h_{\rm bin}(\rho).
\]
The local-constant change is at most $\rho M_*$. Using
$-(1-\rho)\log(1-\rho)\le\rho$, we obtain
\[
 V_\mu(\theta_1,\theta_2)-V_0
 \le \rho(2-A)-2\rho\log\rho,\qquad
 A=\frac{\log(S\tau)}{\tau}+\log2-M_*.
\]
Substitution of the two possible values in $M_*$ gives
$A=\min\{A_1,A_2\}$. The function
$\rho(2-A)-2\rho\log\rho$ on $\rho\ge0$ has maximum
$2e^{-A/2}$, attained at $\rho=e^{-A/2}$.
This proves \eqref{eq:vertex-volume}.
\end{proof}

\begin{corollary}\label{cor:vertex-sum-rate}
Suppose the normalized Minkowski image of $A+A$ is contained in
$\mathcal B_\tau(dS)$ and distinct normalized algebraic integers are
separated by the interiors of translates of $\mathcal Q$.
Then, with $\mu=r_1/d$,
\[
 \log|A+A|\le d\left(\frac{\log S}{\tau}
             +\mu D_{1,\tau}+(1-\mu)D_{2,\tau}
             +E_\tau(S)\right)+O_{\tau,S}(\log(d+2)),
\]
where
\[
 D_{\kappa,\tau}
 =\frac1\kappa\log
       \frac{\Gamma(1+\kappa/\tau)}{\Gamma(1+\kappa)}
  +\frac1\tau\log\frac{e\tau}{\kappa}
  -\log\frac e{2\kappa}.
\]
\end{corollary}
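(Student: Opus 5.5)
The plan is a standard packing argument, with Lemma~\ref{geom:outer-parallel} supplying the enlarged container and \eqref{eq:projected-volume} evaluating both volumes. Read the separation hypothesis as saying that the translates $w+\mathcal Q$, for $w$ in the normalized Minkowski image of $A+A$, have pairwise disjoint interiors. Each such translate lies in $\mathcal B_\tau(dS)+\mathcal Q$. The boundary of a convex body is Lebesgue-null, so disjointness of interiors suffices to add volumes, and
\[
 |A+A|\,\vol\mathcal Q\le\vol\bigl(\mathcal B_\tau(dS)+\mathcal Q\bigr).
\]
Lemma~\ref{geom:outer-parallel} then gives
\[
 \log|A+A|\le\log\vol\mathcal B_\tau(dS)-\log\vol\mathcal Q+dE_\tau(S)+O_{\tau,S}(\log(d+2)).
\]
What remains is to evaluate the two volumes to exponential order, uniformly in the signature.

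For $\mathcal B_\tau(dS)$, apply \eqref{eq:projected-volume} with all coordinates, $D=d$ and $Y=dS$. Stirling's formula applied to $\Gamma(1+d/\tau)$ gives the rate already recorded in the proof of the lemma:
\[
 V_0=\frac{\log(S\tau)+1}{\tau}+\mu b_1+(1-\mu)b_2 .
\]
For $\mathcal Q=\mathcal B_1(d/2)$, take $\tau=1$ in \eqref{eq:projected-volume}. The local factors are then $2$ for each real coordinate and $\pi\Gamma(3)/4=\pi/2$ for each complex coordinate. Together with $(d/2)^d/\Gamma(1+d)$, this gives
\[
 \frac1d\log\vol\mathcal Q
 =1-\log2+\mu\log2+\frac{1-\mu}{2}\log\frac\pi2+O\!\left(\frac{\log(d+2)}d\right).
\]
The Stirling errors here are $O_\tau(\log(d+2))$ and uniform in $r_1,r_2$, as noted in the proof of the lemma.

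The final step is the bookkeeping. Subtract the rate for $\mathcal Q$ from $V_0$ and split the constant $(1+\log\tau)/\tau-1+\log2$ as $\mu(\cdot)+(1-\mu)(\cdot)$. The real part then becomes
\[
 \log\Gamma(1+1/\tau)+\frac{1+\log\tau}\tau-1+\log2=D_{1,\tau}.
\]
The complex part becomes
\[
 \tfrac12\log\Gamma(1+2/\tau)-\frac{\log2}\tau+\tfrac12\log2+\frac{1+\log\tau}\tau-1+\log2 .
\]
Since $\Gamma(3)=2$, this equals $D_{2,\tau}$. The term $(\log S)/\tau$ remains, and adding $E_\tau(S)$ gives the stated bound.

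The only point needing care is the meaning of the normalized image and the separation hypothesis. I would state explicitly that distinct elements of $A+A$ have distinct normalized images. I would also state that these images are centres of translates of $\mathcal Q$ itself, not of $\tfrac12\mathcal Q$, since a factor of $2^d$ would change the constants. The algebra above is routine, but I expect matching the $\kappa=2$ constants to be the most error-prone step.
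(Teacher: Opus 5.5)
Your proposal is correct and follows the paper's own proof: the disjoint open translates of $\mathcal Q$ centred at the normalized points of $A+A$ are packed into $\mathcal B_\tau(dS)+\mathcal Q$, Lemma~\ref{lem:vertex-packing} bounds that volume, and \eqref{eq:projected-volume} with exponents $\tau$ and $1$, together with Stirling, evaluates both volumes. Your explicit bookkeeping, which the paper only summarizes as ``subtraction and Stirling's formula,'' checks out: the real and complex constants do reduce to $D_{1,\tau}$ and $D_{2,\tau}$.
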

\begin{proof}
The translates of the interior $\mathring{\mathcal Q}$ centered at the
elements of $A+A$ are disjoint and lie in
$\mathcal B_\tau(dS)+\mathcal Q$. Since
$\vol(\mathring{\mathcal Q})=\vol(\mathcal Q)$, divide the containing
volume by $\vol\mathcal Q$, apply Lemma~\ref{lem:vertex-packing}, and apply
\eqref{eq:projected-volume} with $\tau$ and with $1$.
The displayed constants follow by subtraction and Stirling's formula.
\end{proof}

\section{Ideal-smooth elements and a coprime additive factor}
\label{sec:sunits}

Let $L$ be a number field with at least one real embedding. Put
$d=[L:\Q]$, $n=r_1+r_2$, and choose one embedding $\sigma_i$ from each
complex conjugate pair. As usual, $\kappa_i=1$ at a real place and
$\kappa_i=2$ at a complex place, so $\sum_i\kappa_i=d$. Let
\[
 \mathcal H=\{y\in\R^n:\textstyle\sum_i\kappa_i y_i=0\},
 \qquad \Lambda_U=\logmap(\OO_L^\times)\subset\mathcal H.
\]
Let $w_L$ be the number of roots of unity in $L$.
Measures on $\mathcal H$ are Euclidean. We use the compact convex body
\[
 C=\{y\in\mathcal H:\textstyle\sum_i\kappa_i e^{\tau y_i}\le dM\},
 \qquad 0<\tau<1.
\]
The parameters $\alpha,M,\tau$ satisfy $M>B_\alpha$ as in
Lemma~\ref{geom:entropy}. Thus, for every fixed $H_0<H_\alpha$,
\begin{equation}\label{smooth:unit-volume}
 \log\vol_{n-1}(C)\ge n\log(H_0/\tau)-o(d).
\end{equation}

\subsection{The centred S-unit lattice}
Let $S=\{\mathfrak p_1,\ldots,\mathfrak p_m\}$ be distinct finite prime
ideals of $L$, let $\ell_j=\log\NN\mathfrak p_j$, and put
\[
 U_S=\{a\in L^\times:v_{\mathfrak p}(a)=0
                        \text{ for every finite }\mathfrak p\notin S\}.
\]
We use the convention $v_{\mathfrak p}(\mathfrak p)=1$. For $a\in U_S$,
write $e(a)=(v_{\mathfrak p_j}(a))_{j=1}^m$ and define
\[
 c(e)=\sum_{j=1}^m e_j\ell_j,\qquad
 y_i(a)=\log|\sigma_i(a)|-\frac{c(e(a))}{d}.
\]
The product formula gives
$\sum_i\kappa_i\log|\sigma_i(a)|=c(e(a))$, so $y(a)\in\mathcal H$.
The map
\[
 a\longmapsto (y(a),e(a))
\]
is a group homomorphism from $U_S$ to the additive group
$\mathcal H\times\R^m$.

\begin{lemma}\label{smooth:mixed-lattice}
Its kernel is the group of roots of unity of $L$, and its image
$\Gamma_S$ is a full lattice of rank $n-1+m$ in
$\mathcal H\times\R^m$. The valuation image
$\Lambda_v=e(U_S)\subset\Z^m$ has index
\[
 J_S=[\Z^m:\Lambda_v]\le h_L.
\]
Above every $e\in\Lambda_v$, the first coordinates of $\Gamma_S$ form
a translate of $\Lambda_U$.
\end{lemma}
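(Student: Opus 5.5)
The proof runs through the exact sequence
\[
 1\to\OO_L^\times\to U_S\xrightarrow{\;e\;}\Lambda_v\to0
\]
and the Dirichlet $S$-unit theorem. The kernel statement and the fibre statement come first, then discreteness and rank, and the index bound last.

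\emph{Kernel and fibres.} Suppose $y(a)=0$ and $e(a)=0$. Then $a$ has zero valuation at every finite prime, so $a\in\OO_L^\times$. Since $c(e(a))=0$, the condition $y(a)=0$ says $\logmap(a)=0$. Dirichlet's theorem then identifies $a$ as a root of unity. Conversely, roots of unity clearly map to $0$.

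For fixed $e\in\Lambda_v$, choose $a_0\in U_S$ with $e(a_0)=e$. The elements of $U_S$ with valuation vector $e$ are exactly $a_0\OO_L^\times$. Because $c(e)$ is constant on this coset, their first coordinates are $y(a_0)+\Lambda_U$. This gives the translate assertion.

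\emph{Lattice.} Discreteness follows from the fibre structure. A bounded region of $\mathcal H\times\R^m$ meets only finitely many $e\in\Z^m$. Above each such $e$ the points form a translate of the discrete lattice $\Lambda_U$, so only finitely many points lie in the region. For the rank, $\Lambda_v$ has finite index in $\Z^m$, as shown below. Hence the group $\Gamma_S$ is an extension of a rank-$m$ lattice by the rank-$(n-1)$ lattice $\Lambda_U$. Its rank is $n-1+m$, which equals $\dim(\mathcal H\times\R^m)$.

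Fullness needs the real span to be everything. The span contains $\mathcal H\times\{0\}$, since $\Lambda_U$ is full in $\mathcal H$ by Dirichlet. It also projects onto $\R^m$, since $\Lambda_v$ has finite index. Together these give the whole space.

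\emph{Index.} Consider the homomorphism $\Z^m\to\Cl(L)$ sending $e\mapsto[\prod_j\mathfrak p_j^{e_j}]$. I claim its kernel is exactly $\Lambda_v$. If $\prod_j\mathfrak p_j^{e_j}=(a)$, then $a\in U_S$ and $e(a)=e$. Conversely, for $a\in U_S$ we have $(a)=\prod_j\mathfrak p_j^{e_j(a)}$. So $\Z^m/\Lambda_v$ embeds in $\Cl(L)$, which gives $J_S\le h_L$ and finite index.

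No step is a real obstacle. The only point needing care is that fullness in $\mathcal H\times\R^m$ requires both ingredients together: fullness of $\Lambda_U$ in $\mathcal H$ and finite index of $\Lambda_v$, which is where the class-group argument is used.
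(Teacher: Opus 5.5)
Your proof is correct and takes essentially the same approach as the paper. Both use the map $\Z^m\to\Cl(L)$ with kernel $\Lambda_v$ for the index bound, identify each fibre as $a_0\OO_L^\times$, and use Dirichlet/Kronecker for the kernel and for fullness of $\Lambda_U$ in $\mathcal H$. The one minor difference is that the paper gets the lattice statement from an explicit basis (lifts of a $\Z$-basis of $\Lambda_v$ together with fundamental units), whereas you prove discreteness from the fibre structure and then count the rank and real span separately.
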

\begin{proof}
The homomorphism
$\Z^m\to\Cl(L)$ sends $e$ to the class of
$\prod_j\mathfrak p_j^{e_j}$. Its kernel is precisely $\Lambda_v$:
an ideal supported on $S$ is principal if and only if its exponent
vector is the valuation vector of an $S$-unit. Its image has order at
most $h_L$, proving the index assertion.

Choose a $\Z$-basis of $\Lambda_v$ and an $S$-unit lifting each basis
vector. Together with fundamental ordinary units, their images form
a basis of $\Gamma_S$: subtraction of the selected lifts reduces
any $S$-unit to an ordinary unit. The valuation coordinates of the
lifts are linearly independent in $\R^m$, while the logarithms of
fundamental ordinary units form a $\mathbb Z$-basis of the unit lattice
$\Lambda_U$ and hence an $\mathbb R$-basis of $\mathcal H$ by
Dirichlet's unit theorem. These images therefore generate a full lattice. The same argument
shows that a fibre over $e$ is a translate of $\Lambda_U$. Finally, an element in the kernel is an ordinary unit all of whose
conjugates have absolute value one; by Kronecker's theorem it is a
root of unity.
\end{proof}

\subsection{Integer exponent truncations and a common origin}
Partition $S$ into finitely many families $S_a$ of cardinalities $m_a$.
Fix numbers $r_a>0$ and norm bounds $W_a>1$ such that
$\NN\mathfrak p_j\le W_a$ for $j\in S_a$. Define
\[
 E_a=\lfloor r_a m_a\rfloor,\qquad
 V=\{z\in\R^m:z_j\ge0,\
                 \textstyle\sum_{j\in S_a}z_j\le E_a
                 \text{ for every }a\}.
\]
This is a compact convex body; zero-dimensional families are omitted.
Its integer-point count is exactly
\begin{equation}\label{smooth:integer-count}
 N_V=|V\cap\Z^m|=\prod_a\binom{m_a+E_a}{m_a}.
\end{equation}

\begin{lemma}\label{smooth:count-doubling}
There are $u\in\mathcal H$ and $r\in\Z^m$ such that the full inverse
image $H$ in $U_S$ of
\[
 \Gamma_S\cap((u,r)+(C\times V))
\]
satisfies
\begin{equation}\label{smooth:H-lower}
 |H|\ge
 \frac{w_L N_V\vol_{n-1}(C)}{J_S\covol(\Lambda_U)}
 \quad\text{and}\quad
 |HH|\le 2^{n-1+m}|H|.
\end{equation}
For every $h\in H$, the vector $z(h)=e(h)-r$ is nonnegative and
belongs to $V$.
\end{lemma}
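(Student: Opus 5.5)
The lower bound comes from an averaging argument over translates, and the doubling bound from a containment. Write $K=C\times V$, a compact convex body in $\mathcal H\times\R^m$, symmetric about no particular point. For the lower bound, average over $u$ in a fundamental domain of $\Lambda_U$ and $r$ over coset representatives. We average lattice points of $\Gamma_S$ in $(u,r)+K$ with $u$ uniform on $\mathcal H/\Lambda_U$ and $r$ uniform on a set of coset representatives of $\Z^m/\Lambda_v$.

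For a fixed $e\in\Lambda_v$, Lemma~\ref{smooth:mixed-lattice} says the fibre of $\Gamma_S$ over $e$ is a translate $y_e+\Lambda_U$. The number of points $y\in y_e+\Lambda_U$ with $y-u\in C$ therefore has average over $u\in\mathcal H/\Lambda_U$ exactly $\vol_{n-1}(C)/\covol(\Lambda_U)$. The constraint on the second coordinate is $e-r\in V$. Summing over $e\in\Lambda_v$ and averaging over the $J_S$ representatives $r$, each integer point $z\in V\cap\Z^m$ is realised as $e-r$ for exactly one pair $(e,r)$ with $r$ in the representative set, namely the one with $r\equiv -z$ modulo $\Lambda_v$. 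Hence the average count equals $N_V\vol_{n-1}(C)/(J_S\covol(\Lambda_U))$. Some $(u,r)$ attains at least the average. Because $r$ is an integer vector and $e(h)\in\Z^m$, we have $z(h)=e(h)-r\in V\cap\Z^m$, which is nonnegative by the definition of $V$. Finally, the kernel of $a\mapsto(y(a),e(a))$ is the group of roots of unity (Lemma~\ref{smooth:mixed-lattice}), so the full inverse image multiplies the count by $w_L$. This gives the first inequality in \eqref{smooth:H-lower}.

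For the doubling bound, the map is a homomorphism, so the image of $HH$ lies in $\Gamma_S\cap(2(u,r)+K+K)=\Gamma_S\cap(2(u,r)+2K)$, using convexity for $K+K=2K$. The inverse image of this set is again a union of root-of-unity cosets, so $|HH|\le w_L|\Gamma_S\cap(2(u,r)+2K)|$. It remains to compare $|\Gamma_S\cap(x+2K)|$ with $|\Gamma_S\cap(x'+K)|$. The standard route is a covering lemma: in a rank-$N$ lattice, with $N=n-1+m$, the number of lattice points in any translate of $2K$ is at most $2^N$ times the maximum over translates of $K$. The general constant this yields is $5^N$, and the sharp $2^N$ needs more care.

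The main obstacle is getting exactly $2^N$ with the specific translate used for $H$. I would first try to choose $(u,r)$ by maximising the count $|\Gamma_S\cap((u,r)+K)|$ rather than merely exceeding the average. The maximum exists because the count is a bounded, upper semicontinuous, periodic function of the translate, with a discrete parameter in the $r$ direction. With this choice, the average lower bound still holds, and every translate of $K$ contains at most $|H|/w_L$ lattice points.

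The covering then goes by reduction modulo $2$. Pick a set $T$ of representatives of $\Gamma_S/2\Gamma_S$, which has exactly $2^N$ elements. For each $t\in T$, the map $g\mapsto(g-t)/2$ sends the points of $\Gamma_S\cap(x+2K)$ congruent to $t$ into $\Gamma_S\cap(\tfrac{x-t}{2}+K)$. Dividing by $2$ maps $x+2K$ onto $x/2+K$, so this class has at most $\max_{x'}|\Gamma_S\cap(x'+K)|$ points. The translates must respect the integrality constraint on $r$. Translates with non-integral $r$ part can be slid to an integral one without losing points, because the $e$-coordinates of lattice points are integral and $V$ is defined by integral inequalities. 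Summing over the $2^N$ classes gives $|HH|\le 2^{N}|H|$.
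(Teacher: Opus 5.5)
Your proposal is correct and follows the paper's proof essentially step for step: an averaging argument for the lower bound, then taking the count-maximizing pair $(u,r)$ so that it dominates every translate, then splitting $\Gamma_S\cap(2(u,r)+2(C\times V))$ into its $2^{n-1+m}$ classes modulo $2\Gamma_S$ and sliding the half-integral valuation origins back to integral ones. The differences are cosmetic: you average jointly over $(u,r)$ rather than fixing $r$ first, and you obtain the maximum from upper semicontinuity rather than from the counts being bounded integers. One point to tighten is that the sliding step is the paper's replacement $s\mapsto\lceil s\rceil$; it works because $V$ is cut out by $z_j\ge0$ and by upper bounds on sums with nonnegative coefficients, not because the $E_a$ are integers.
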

\begin{proof}
Average over the $J_S$ cosets of $\Lambda_v$ in $\Z^m$.
Every $z\in V\cap\Z^m$ lies in exactly one coset, so there is
$r\in\Z^m$ for which
\[
 |(r+V)\cap\Lambda_v|\ge N_V/J_S.
\]
For each such valuation vector $e$, choose a logarithmic lift
$y_e\in\mathcal H$. If $D_U$ is a measurable fundamental domain for
$\Lambda_U$, translation invariance and lattice tiling give
\[
 \int_{D_U}
 \sum_{e\in(r+V)\cap\Lambda_v}
 |(y_e+\Lambda_U)\cap(u+C)|\,du
 = |(r+V)\cap\Lambda_v|\vol_{n-1}(C).
\]
Since $\vol(D_U)=\covol(\Lambda_U)$, averaging over $D_U$ shows
that one common $u$ works for the sum of all valuation fibres; the
fibres need not be aligned. Every lattice point has $w_L$ inverse
images, proving the first estimate for some translate.

Now maximize the lattice-point count over all $u\in\mathcal H$ and
$r\in\Z^m$. The maximum exists. Indeed, the number of lattice points
in any translate of a fixed compact set is uniformly bounded: a
packing by small disjoint neighbourhoods of lattice points proves
this directly. The possible counts are thus bounded nonnegative
integers, so their supremum is attained by one of the allowed
translates. Use this maximizing pair to define $H$.

This maximum also bounds the count for every origin
$(u,s)\in\mathcal H\times\R^m$. If $(y,e)$ is a lattice point in
$(u,s)+(C\times V)$, then $e\in\Z^m$ and $e_j\ge s_j$ imply
$e_j\ge\lceil s_j\rceil$. Replacing $s$ componentwise by $\lceil s\rceil$
decreases every nonnegative displacement, so preserves all the
simplex inequalities. It does not change $y$ or $u$.

Let $\mathcal T=\Gamma_S\cap((u,r)+(C\times V))$. Convexity gives
$\mathcal T+\mathcal T\subset
\Gamma_S\cap(2(u,r)+2(C\times V))$. Partition the latter intersection
into its $2^{n-1+m}$ cosets modulo $2\Gamma_S$. In each coset,
subtracting a representative and dividing by two gives lattice
points in a translate of $C\times V$, possibly with noninteger
valuation origin. Each count is at most $|\mathcal T|$ by the
preceding paragraph. Thus
$|\mathcal T+\mathcal T|\le2^{n-1+m}|\mathcal T|$.
Since $H$ includes the full torsion fibre,
$|H|=w_L|\mathcal T|$ and $|HH|=w_L|\mathcal T+\mathcal T|$:
for any sum of logarithmic points, a root of unity can be supplied
by either factor. This proves the second estimate.
\end{proof}

For reference, the ordinary logarithm lattice satisfies
\begin{equation}\label{smooth:ordinary-covol}
 \covol(\Lambda_U)=2^{-r_2}\sqrt{r_1+4r_2}\,R_L.
\end{equation}
Indeed, the weighted logarithm matrix of fundamental units has
each deleted-row minor of absolute value $R_L$. For the unweighted
matrix the corresponding minor is
$\kappa_iR_L/2^{r_2}$; the Gram determinant and Cauchy--Binet give
the displayed formula. In particular
$J_S\covol(\Lambda_U)\le
\sqrt{r_1+4r_2}\,h_LR_L/2^{r_2}$.
Only this one class-group index is paid, even though all valuation
families are truncated simultaneously.

\subsection{A rank-two residue sieve}

\begin{lemma}\label{smooth:rank-sieve}
Let $B\subset\OO_L$ be finite and nonempty, and let $X\in\Z$ be nonzero
modulo every prime ideal in $S$. There are $b_0\in B$ and
$B'\subset B$ such that
\[
 |B'|\ge2^{-m}|B|,\qquad
 b-b_0+X\notin\mathfrak p_j
       \quad(b\in B',\ 1\le j\le m).
\]
\end{lemma}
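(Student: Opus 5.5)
The natural route is to prove the bound by a rank argument. The first step recasts the lemma as a statement about the \emph{support} of a structured matrix. The averaging over $b_0$ in $B$ is absorbed into the choice of a single row of that matrix.

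\emph{Step 1: a two-dimensional local feature.} For each $j$ let $p_j$ be the rational prime under $\mathfrak p_j$, and $\pi_j\colon\OO_L\to\OO_L/\mathfrak p_j=\mathbb F_{q_j}$ the residue map. Since $X\in\Z$ is nonzero modulo $\mathfrak p_j$, its image $\bar X_j$ is a nonzero element of $\mathbb F_{p_j}\subset\mathbb F_{q_j}$. Define the $B\times B$ matrix
\[
 A_j(b_0,b)=\pi_j(b)-\pi_j(b_0)+\bar X_j\in\mathbb F_{q_j}.
\]
Then $A_j(b_0,b)\neq0$ exactly when $b-b_0+X\notin\mathfrak p_j$. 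Its rows are $b_0\mapsto$ combinations of the two fixed vectors $(\pi_j(b))_b$ and $\mathbf 1$, so $\operatorname{rank}A_j\le2$. The diagonal is $\bar X_j\neq0$.

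\emph{Step 2: combining the primes.} First group the primes by characteristic. For those $j$ with a common $p_j=p$, embed all residue fields in one finite field $\mathbb F$ of characteristic $p$ and take the Hadamard product $A^{(p)}=\prod_j A_j$. Its rank is at most $2^{m_p}$, since the Hadamard product of matrices is a submatrix of their Kronecker product. It has nonzero diagonal, and its support is the set of pairs that are good for every such $j$.

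To handle distinct characteristics simultaneously, I would pass to a common field of characteristic zero by a multiplicative lift. Concretely, replace each $\mathbb F_{q_j}$-valued entry by a Teichm\"uller-type lift in $\C$, chosen so that zero goes to zero and the rank-two structure is kept through an explicit two-term expansion. Alternatively, encode each $A_j$ by its two-dimensional feature vectors $(1,\pi_j(b))$ and $(\bar X_j-\pi_j(b_0),1)$ and tensor these across all $j$. The goal is a single matrix $N$ over one field with:
\begin{itemize}
 \item rank at most $2^m$;
 \item nonzero diagonal;
 \item support contained in the set of jointly good pairs.
\end{itemize}
This is the step I expect to be the main obstacle. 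Mixing characteristics destroys the naive Hadamard-rank bound. If the lift does not work, I would look for an argument that only uses the support-covering property of Step~3, applied through the Kronecker product of the feature vectors rather than through an entrywise product.

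\emph{Step 3: support covering.} Given such an $N$ of rank $r\le2^m$, choose $r$ rows $b_0^{(1)},\dots,b_0^{(r)}$ forming a basis of its row space. Every row is a linear combination of these, so the support of every row lies in the union of their supports. Row $b$ has a nonzero entry in column $b$, so the union of the $r$ supports is all of $B$. Hence some basis row $b_0$ has at least $|B|/r\ge2^{-m}|B|$ nonzero entries. Taking $B'$ to be this support gives $|B'|\ge 2^{-m}|B|$ and $b-b_0+X\notin\mathfrak p_j$ for all $b\in B'$ and all $j$, as required. This step uses no equidistribution of $B$ in residue classes, which matches the claim in the introduction that each prime costs a factor of at most $2$.
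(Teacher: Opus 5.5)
Your Steps 1 and 3 are fine, and Step 3 is exactly how the paper finishes: a basis of actual rows must have supports covering every column, because of the nonzero diagonal.

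The gap is Step 2, which carries the real content of the lemma. When the $\mathfrak p_j$ have different residue characteristics, you never produce a single matrix over one field with rank at most $2^m$, nonzero diagonal, and support inside the jointly good pairs. Neither candidate you list does this as stated:
\begin{itemize}
\item Tensoring the vectors $(1,\pi_j(b))$ across $j$ is not defined when they live in fields of different characteristic.
\item An entrywise Teichm\"uller lift keeps the zero pattern but destroys rank two, because the lift is multiplicative, not additive. For a residue field $\mathbb F_p$ with $p\ge5$, already on a set of representatives of $\mathbb F_p$ the matrix $\bigl(\omega(y-x+\bar X_j)\bigr)_{x,y\in\mathbb F_p}$ is circulant. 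Its eigenvalues are Gauss sums, nonzero except at the trivial frequency, so its rank is $p-1>2$.
\end{itemize}

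The missing idea is that the local rank-two factor need not respect the arithmetic of $\OO_L/\mathfrak p_j$ at all. It only needs a bilinear pairing of a row feature of $\bar a_j$ with a column feature of $\bar b_j$. This pairing must vanish when $\bar b_j=\bar a_j-\bar X_j$ and be nonzero when $\bar b_j=\bar a_j$.

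The paper does this over the single field $\mathbb F_2$. It properly colours each cycle of translation by $X$ in $\OO_L/\mathfrak p_j$ with at most three colours, and identifies the colours with the three nonzero vectors $v_j(\cdot)$ of $\mathbb F_2^2$. It then sets $K(a,b)=\prod_j\det\bigl(v_j(\bar a_j-X),v_j(\bar b_j)\bigr)$.
\begin{itemize}
\item Adjacent colours are distinct nonzero, hence independent, vectors, so the diagonal is $1$.
\item A forbidden pair gives two equal vectors, so the determinant is $0$.
\item Each determinant is bilinear, so tensoring gives rank at most $2^m$ over $\mathbb F_2$.
\end{itemize}

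Your own framework can be repaired the same way over $\Q$. Choose any injection $\phi_j\colon\OO_L/\mathfrak p_j\to\Q$ and use $\phi_j(\bar b_j)-\phi_j(\bar a_j-\bar X_j)$. This has rank at most two for any $\phi_j$. It vanishes exactly at forbidden pairs by injectivity, and it is nonzero on the diagonal because $\bar X_j\neq0$. The Hadamard product over all $j$ then has the three properties you list, grouping by characteristic becomes unnecessary, and your Step 3 completes the proof.
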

\begin{proof}
In the finite field $E_j=\OO_L/\mathfrak p_j$, translation by $X$
has cycles of length equal to the residue characteristic. Properly
colour each cycle with at most three colours, and identify the
colours with the three nonzero vectors in the auxiliary vector
space $\mathbb F_2^2$. Write $v_j(a)$ for the vector assigned to
$a\in E_j$. Adjacent vectors are distinct, so
\[
 \det(v_j(a-X),v_j(a))=1\quad\text{in }\mathbb F_2.
\]
For $a\in B$, let $\bar a_j$ denote the image of $a$ in
$E_j=\OO_L/\mathfrak p_j$. On the actual elements $a,b\in B$, form
the matrix
\[
 K(a,b)=\prod_{j=1}^m
 \det(v_j(\bar a_j-X),v_j(\bar b_j)).
\]
Each determinant is bilinear in a two-dimensional row feature and
column feature. Their tensor products factor the matrix through
a vector space of dimension $2^m$. Hence its rank over $\mathbb F_2$
is at most $2^m$. Its diagonal entries are one. Moreover,
$K(a,b)=0$ whenever $\bar b_j=\bar a_j-X$ for some $j$.

Choose a basis of actual rows of $K$. Its row supports cover all
columns: otherwise a column vanishing on the basis rows would
vanish on every row, contrary to its diagonal entry. Therefore
some basis row has at least $|B|/2^m$ nonzero entries. Let $b_0$
be the element indexing this row and let $B'$ be its nonzero
column indices. The forbidden residue condition proves the lemma.
\end{proof}

Fix $0<\varepsilon<1/\sqrt5$ and a positive integer $X$ coprime to
all residue characteristics occurring in $S$. In the Minkowski
space, let $D_X$ be the product of real intervals of radius
$\varepsilon X/2$ and complex discs of the same radius. Averaging
over a fundamental domain of the integer lattice produces a
translate containing a set $B\subset\OO_L$ with
\[
 |B|\ge
 (\varepsilon X)^d(\pi/2)^{r_2}|\Delta_L|^{-1/2}.
\]
Apply Lemma~\ref{smooth:rank-sieve} and put
$P=B'-b_0+X$. All $p\in P$ satisfy
\begin{equation}\label{smooth:P-properties}
 |\sigma_i(p)-X|\le\varepsilon X,\qquad
 v_{\mathfrak p_j}(p)=0.
\end{equation}
The first assertion uses the half-radius of $D_X$ and the fact that
both $b$ and $b_0$ are in the same translate. If $\rd(L)\le\delta$,
we may take a subset of $P$ of precisely
\begin{equation}\label{smooth:P-count}
 \left\lfloor2^{-m}(\varepsilon X)^d
                  (\pi/2)^{r_2}\delta^{-d/2}\right\rfloor
\end{equation}
elements whenever this number is positive. This exact choice
will provide both a lower and an upper asymptotic estimate for $|P|$.

\begin{lemma}\label{smooth:injectivity}
For every finite $H\subset U_S$ and every $P$ satisfying
\eqref{smooth:P-properties}, multiplication $H\times P\to HP$
is injective.
\end{lemma}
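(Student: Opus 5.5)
We will show that the product $hp$ determines both $h\in H$ and $p\in P$. Suppose $h_1p_1=h_2p_2$ with $h_1,h_2\in H$ and $p_1,p_2\in P$.

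First compare $\mathfrak p_j$-adic valuations. By the second condition in \eqref{smooth:P-properties}, $v_{\mathfrak p_j}(p_1)=v_{\mathfrak p_j}(p_2)=0$ for every $j$, so $e(h_1)=e(h_2)$. For a finite prime $\mathfrak p\notin S$, the elements of $H\subset U_S$ have valuation zero. Hence $v_{\mathfrak p}(p_1)=v_{\mathfrak p}(p_2)$ as well. Since $p_1,p_2$ are nonzero (see below) and are both units at every prime of $S$, the principal ideals $(p_1)$ and $(p_2)$ coincide. Therefore $u=p_1/p_2=h_2/h_1$ is an ordinary unit of $\OO_L$.

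Next use the archimedean condition. For every embedding, $|\sigma_i(p)-X|\le\varepsilon X$ with $\varepsilon<1/\sqrt5<1$, so $\sigma_i(p)\ne0$ and
\[
 \frac{1-\varepsilon}{1+\varepsilon}\le|\sigma_i(u)|\le\frac{1+\varepsilon}{1-\varepsilon}.
\]
Moreover, $\sigma_i(u)=\sigma_i(p_1)/\sigma_i(p_2)$ is a quotient of two complex numbers in the disc of radius $\varepsilon X$ about $X$. Its argument is therefore at most $2\arcsin\varepsilon$ in absolute value, and its real part is positive when $\varepsilon<1/\sqrt2$. The hypothesis $\varepsilon<1/\sqrt5$ gives a stronger bound: $|\sigma_i(u)-1|<1$ at every place. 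Indeed, $|p_1-p_2|\le2\varepsilon X$ and $|p_2|\ge(1-\varepsilon)X$ give
\[
 |\sigma_i(u)-1|\le\frac{2\varepsilon}{1-\varepsilon},
\]
and this is $<1$ exactly when $\varepsilon<1/3$. That is too weak, so I would use a refined estimate instead of this crude one.

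The cleanest route is to consider the algebraic integer $p_1-p_2=p_2(u-1)$. If $u\ne1$, its norm satisfies
\[
 |\NN(p_1-p_2)|=|\NN(p_2)|\,|\NN(u-1)|=\prod_i|\sigma_i(p_2)|^{\kappa_i}\,|\sigma_i(u)-1|^{\kappa_i}.
\]
We also have $|\sigma_i(p_1-p_2)|\le2\varepsilon X$ and $|\sigma_i(p_2)|\ge(1-\varepsilon)X$. This alone only gives a contradiction when $2\varepsilon<1-\varepsilon$.

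A better invariant is the unit itself. The norm of $u-1$ is a nonzero rational integer, so $\prod_i|\sigma_i(u)-1|^{\kappa_i}\ge1$. Combined with $\prod_i|\sigma_i(u)|^{\kappa_i}=1$, we need a pointwise inequality of the form
\[
 |w-1|^2<|w|\quad\text{for all } w=z_1/z_2,\ z_k\in D(X,\varepsilon X).
\]
By scaling, take $X=1$. I expect the extremal case to be $z_1=1+\varepsilon$, $z_2=1-\varepsilon$, or the tangential configuration. The real case gives
\[
 \frac{(2\varepsilon)^2}{(1-\varepsilon)^2}<\frac{1+\varepsilon}{1-\varepsilon},
 \quad\text{i.e.}\quad 4\varepsilon^2<1-\varepsilon^2,
\]
which is exactly $\varepsilon<1/\sqrt5$. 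This strongly suggests that the threshold is designed for this inequality: $|w-1|^2<|w|$ means $|z_1-z_2|^2<|z_1||z_2|$. Proving this over the two discs is the main obstacle. I would minimize $|z_1||z_2|-|z_1-z_2|^2$: fixing the difference $z_1-z_2$, the product $|z_1||z_2|$ is smallest when both points are pushed toward the origin along the disc boundary. A short computation should then confirm positivity for $\varepsilon<1/\sqrt5$.

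Granting the inequality, the product over all places gives $|\NN(u-1)|<\prod_i|\sigma_i(u)|^{\kappa_i/2}=1$. This contradicts $\NN(u-1)\in\Z\setminus\{0\}$, so $u=1$. Hence $p_1=p_2$, and then $h_1=h_2$, proving injectivity.
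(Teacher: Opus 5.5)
Your route is the paper's. You reduce to an ordinary unit $u=p_1/p_2=h_2/h_1$, then combine $|\NN(u-1)|\ge1$ (for $u\ne1$) with $|\NN u|=1$ and the pointwise bound $|\sigma(u)-1|^2<|\sigma(u)|$. The paper packages this as the algebraic integer $u+u^{-1}-2$, whose conjugates have modulus $|\sigma(u)-1|^2/|\sigma(u)|$; that is the same device as your $|\NN(u-1)|^2=|\NN((u-1)^2/u)|$. Your valuation step and final norm step are fine.

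The gap is that the pointwise inequality, which is the whole analytic content of the lemma, is never proved. You check only the real configuration $z_1=1+\varepsilon$, $z_2=1-\varepsilon$ and then write ``granting the inequality''. The sketched minimization (``push both points toward the origin along the boundary'') is not an argument. Separately, your intermediate claim that $\varepsilon<1/\sqrt5$ gives $|\sigma_i(u)-1|<1$ at every place does not follow from the disc condition. In that same real configuration $|\sigma(u)-1|=2\varepsilon/(1-\varepsilon)>1$ once $\varepsilon>1/3$. Delete that claim; you do not actually use it.

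The missing step closes in a few lines. Put $s_k=\sigma(p_k)/X$ and $z=\sigma(u)$. Then $|s_k-1|\le\varepsilon$ and $s_1=zs_2$, so $z-1=(s_1-1)-z(s_2-1)$ and hence $|z-1|\le\varepsilon(1+|z|)$. Therefore
\[
 \frac{|z-1|^2}{|z|}\le\varepsilon^2\bigl(|z|+2+|z|^{-1}\bigr)\le\frac{4\varepsilon^2}{1-\varepsilon^2}.
\]
The last step holds because $|z|$ lies in $[(1-\varepsilon)/(1+\varepsilon),(1+\varepsilon)/(1-\varepsilon)]$. This interval is stable under $x\mapsto1/x$, and the convex function $x+1/x$ is largest at its endpoints. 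The right side is $<1$ exactly when $5\varepsilon^2<1$. This also confirms your guess that the real configuration is extremal. With this inserted, your proof is complete.
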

\begin{proof}
Suppose $h_1p_1=h_2p_2$. The ratio
$w=h_2/h_1=p_1/p_2$ is an $S$-unit. Its valuations at $S$ are zero
by \eqref{smooth:P-properties}, so it is an ordinary unit.
For any embedding put $z=\sigma(w)$ and
$s_j=\sigma(p_j)/X$. Then $s_1=zs_2$ and
$|s_j-1|\le\varepsilon$, giving
\[
 |z-1|\le\varepsilon(1+|z|),\qquad
 \frac{1-\varepsilon}{1+\varepsilon}\le|z|
       \le\frac{1+\varepsilon}{1-\varepsilon}.
\]
Every conjugate of the algebraic integer $w+w^{-1}-2$ consequently
has modulus at most
\[
 \frac{|z-1|^2}{|z|}
 \le\varepsilon^2(|z|+2+|z|^{-1})
 \le\frac{4\varepsilon^2}{1-\varepsilon^2}<1.
\]
The norm of a nonzero algebraic integer is a nonzero rational integer.
Thus $w+w^{-1}-2=0$, so $(w-1)^2=0$ and $w=1$. Both factors are equal.
\end{proof}

\subsection{The physical norm and the sumset}
For the maximizing origins $u,r$ of Lemma~\ref{smooth:count-doubling},
put
\[
 I=\prod_{j=1}^m\mathfrak p_j^{r_j},\qquad
 c(r)=\log\NN I.
\]
Here $I$ may be fractional and $\NN I$ is a positive rational number.
Every element of $H$ belongs to $I$ because its valuations at $S$
are at least $r_j$ and its valuations elsewhere are zero.
Consequently $A=HP\subset I$, and $A+A\subset I$.

Scale Minkowski space by the diagonal real-linear map
\begin{equation}\label{smooth:physical-scaling}
 (z_i)\longmapsto
        (e^{-u_i-c(r)/d}z_i).
\end{equation}
Since $u\in\mathcal H$, its real determinant is
$e^{-c(r)}=(\NN I)^{-1}$; it need not be one.
For every nonzero $a\in I$, the integral ideal $(a)I^{-1}$ has norm
$|\NN_{L/\Q}a|/\NN I\ge1$. Therefore its scaled embedding vector
$z$ satisfies
\begin{equation}\label{smooth:physical-separation}
 \prod_i|z_i|^{\kappa_i}\ge1.
\end{equation}
This is the norm separation needed for packing.

For $h\in H$ set $z(h)=e(h)-r$ and
$c_h=\sum_jz_j(h)\ell_j$. The simplex truncations give
\[
 0\le c_h\le dT_d,\qquad
 T_d=\frac1d\sum_a E_a\log W_a.
\]
After scaling by \eqref{smooth:physical-scaling}, the absolute values
of $h$ are
$e^{c_h/d}e^{y_i(h)-u_i}$. The \emph{individually centred}
vectors $e^{y_i(h)-u_i}$ have norm product one and satisfy
\[
 \sum_i\kappa_i e^{\tau(y_i(h)-u_i)}\le dM.
\]
For $h_1,h_2\in H$, apply Lemma~\ref{geom:moment} with
\[
 a_i=e^{\tau(y_i(h_1)-u_i)},\qquad
 b_i=e^{\tau(y_i(h_2)-u_i)}.
\]
Then $\prod_i a_i^{\kappa_i}=\prod_i b_i^{\kappa_i}=1$ and
$\sum_i\kappa_i a_i,\sum_i\kappa_i b_i\le dM$, so
\[
 \sum_i\kappa_i
 \left(e^{y_i(h_1)-u_i}+e^{y_i(h_2)-u_i}\right)^\tau
 \le dF_\tau(M).
\]
Using also $e^{c_h/d}\le e^{T_d}$ and
$|\sigma_i(p)|\le(1+\varepsilon)X$ gives the claimed containment.
Every scaled element of $A+A$ consequently lies in
\begin{equation}\label{smooth:sum-body}
 \mathcal B_\tau(dS_d),\qquad
 S_d=F_\tau(M)(1+\varepsilon)^\tau X^\tau e^{\tau T_d},
\end{equation}
where
$\mathcal B_\tau(Y)=\{z:\sum_i\kappa_i|z_i|^\tau\le Y\}$.
It is essential to centre each vector before applying the
norm-one two-point inequality; the finite norm budget is restored
only through the final factor $e^{\tau T_d}$.

The open body
\[
 \mathcal Q=\{z:\textstyle\sum_i\kappa_i|z_i|<d/2\}
\]
has disjoint translates by distinct scaled points of $I$.
Indeed, an intersection would give a nonzero difference in the
scaled lattice with weighted sum of absolute values less than $d$.
Weighted arithmetic--geometric mean contradicts
\eqref{smooth:physical-separation}. Therefore
\begin{equation}\label{smooth:packing}
 |A+A|\le
 \frac{\vol(\mathcal B_\tau(dS_d)+\overline{\mathcal Q})}
      {\vol(\mathcal Q)}.
\end{equation}
All bodies are measurable and bounded; using the closure in the
numerator does not affect this disjoint-open-translates argument.

\subsection{Asymptotic rates}
Suppose $d\to\infty$, $r_1/d\to\mu$, $m_a/d\to\beta_a$, and assume
the arithmetic estimate
\[
 \limsup\frac1d\log(h_LR_L/2^{r_2})\le\mathscr R(\mu).
\]
The number of families and all their parameters are fixed along
this sequence. Write
\[
\begin{gathered}
 h(r)=(1+r)\log(1+r)-r\log r,\\
 s=\sum_a\beta_a h(r_a),\qquad
 T=\sum_a\beta_a r_a\log W_a,\qquad
 \gamma=\Bigl(\sum_a\beta_a\Bigr)\log2.
\end{gathered}
\]
Stirling's formula in \eqref{smooth:integer-count}, together with
\eqref{smooth:unit-volume} and \eqref{smooth:ordinary-covol}, gives
\begin{align}
 \log|H|&\ge(g(\mu)-o(1))d,&
 g(\mu)&=\frac{1+\mu}{2}\log(H_0/\tau)-\mathscr R(\mu)+s,
 \label{smooth:g-rate}\\
 |HH|&\le e^{(\lambda(\mu)+o(1))d}|H|,&
 \lambda(\mu)&=\frac{1+\mu}{2}\log2+\gamma.
 \label{smooth:lambda-rate}
\end{align}
Choosing $P$ by \eqref{smooth:P-count} gives, if $p(\mu)>0$,
\begin{equation}\label{smooth:p-rate}
 \log|P|=(p(\mu)+o(1))d,\qquad
 p(\mu)=\log X+\log\varepsilon-\tfrac12\log\delta
       +\tfrac{1-\mu}{2}\log(\pi/2)-\gamma.
\end{equation}
The outer-parallel-body bound of
Lemma~\ref{geom:outer-parallel}, applied to \eqref{smooth:packing},
gives
\begin{equation}\label{smooth:b-rate}
 \log|A+A|\le(b(\mu)+o(1))d,
\end{equation}
where, with $S=F_\tau(M)(1+\varepsilon)^\tau X^\tau e^{\tau T}$,
\[
 b(\mu)=\log X+T+\log(1+\varepsilon)
 +\frac{\log F_\tau(M)}{\tau}
 +\mu D_{1,\tau}+(1-\mu)D_{2,\tau}+E_\tau(S).
\]
Continuity of the volume bound in $S$ justifies replacing $S_d$ by
its limit: first use the fixed containing body with parameter $S+\eta$,
and then let $\eta$ decrease to zero.
Here $D_{\kappa,\tau}$ is the exact volume-ratio constant
\[
 D_{\kappa,\tau}
 =\frac1\kappa\log\frac{\Gamma(1+\kappa/\tau)}{\Gamma(1+\kappa)}
  +\frac1\tau\log\frac{e\tau}{\kappa}
  -\log\frac e{2\kappa}.
\]
This follows by polar integration:
\[
 \vol(\mathcal B_\tau(Y))
 =\frac{Y^{d/\tau}}{\Gamma(1+d/\tau)}
  \prod_i
  \frac{v_{\kappa_i}\Gamma(1+\kappa_i/\tau)}
       {\kappa_i^{\kappa_i/\tau}},
 \qquad v_1=2,\quad v_2=\pi,
\]
and Stirling's formula, with $\vol(\mathcal Q)$ obtained by
taking $\tau=1$ and $Y=d/2$.

\begin{proposition}\label{smooth:criterion}
Let $q>1$ and put $a(\mu)=g(\mu)+p(\mu)$.
If, for the limiting signature and the selected prime families,
\[
 p(\mu)>0,\qquad a(\mu)>0,\qquad
 b(\mu)<q\,a(\mu),\qquad
 p(\mu)+\lambda(\mu)<(q-1)a(\mu),
\]
then, for all sufficiently large degrees in this sequence,
\[
 \max\{|A+A|,|AA|\}\le |A|^q.
\]
The sets can be regarded as finite real sets, or as sets of algebraic
integers after a common rational-integer scaling.
\end{proposition}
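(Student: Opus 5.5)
The plan is to assemble the four rate estimates already proved and compare exponents. Put $A=HP$, with $H$ from Lemma~\ref{smooth:count-doubling} and $P$ the subset of exact size \eqref{smooth:P-count}; this is positive for large $d$ because $p(\mu)>0$. By Lemma~\ref{smooth:injectivity}, multiplication $H\times P\to A$ is injective, so $|A|=|H||P|$. Then \eqref{smooth:g-rate} and \eqref{smooth:p-rate} give
\[
 \log|A|\ge(a(\mu)-o(1))d .
\]
Since $a(\mu)>0$, this tends to infinity. The sumset bound is immediate: \eqref{smooth:b-rate} gives $\log|A+A|\le(b(\mu)+o(1))d$. The strict inequality $b(\mu)<qa(\mu)$ leaves a fixed positive margin, which absorbs both $o(1)$ terms, so $|A+A|\le|A|^q$ for all large $d$.

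For the product set, use $AA=(HH)(PP)$, so that $|AA|\le|HH|\,|PP|\le|HH|\,|P|^2$. Combining this with \eqref{smooth:lambda-rate} gives
\[
 \log|AA|\le\log|H|+2\log|P|+(\lambda(\mu)+o(1))d .
\]
The right side is $\log|A|+\log|P|+(\lambda+o(1))d$. I then need an \emph{upper} bound for $\log|P|$, and this is where the exact choice \eqref{smooth:P-count} matters: it gives $\log|P|=(p(\mu)+o(1))d$ with equality rather than a mere lower bound. Hence
\[
 \log|AA|-\log|A|\le(p+\lambda+o(1))d .
\]
The fourth hypothesis, $p+\lambda<(q-1)a$, together with $\log|A|\ge(a-o(1))d$, yields $\log|AA|\le q\log|A|$ for large $d$. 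Note that $\log|A|$ is only bounded below. This is harmless, because $q-1>0$ and a larger $|A|$ only helps.

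Finally I would address the interpretation. The elements of $A$ are real numbers, since $L\subset\R$ via a fixed real embedding, and the bounds concern cardinalities only. The elements of $H$ lie in the possibly fractional ideal $I$. Multiplying by a positive rational integer $N$ with $NI\subset\OO_L$ gives a set $NA$ of algebraic integers. Dilation preserves $|A+A|$, and $|(NA)(NA)|=|N^2AA|=|AA|$, so the conclusion transfers unchanged.

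The main obstacle I expect is the bookkeeping of the $o(1)$ terms. The rates in \eqref{smooth:g-rate}--\eqref{smooth:b-rate} are all asymptotic along the same sequence, with fixed families and parameters. The strict inequalities in the hypotheses supply fixed margins $\eta>0$, and each error term must eventually fall below $\eta/4$. I also need to confirm that the upper estimate for $|P|$ really comes from the floor in \eqref{smooth:P-count} and not from the sieve lower bound.
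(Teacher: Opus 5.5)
Your proposal is correct and follows essentially the same route as the paper. Injectivity gives $|A|=|H||P|$, and the sumset is handled directly by \eqref{smooth:b-rate}. The product set uses $|AA|\le|HH|\,|P|^2$, with the exact size \eqref{smooth:P-count} supplying the needed upper bound on $|P|$, and the strict inequalities absorb the $o(d)$ errors. The final rescaling by a positive rational integer clearing the denominator of $I$ is likewise exactly the paper's argument.
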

\begin{proof}
Injectivity gives $|A|=|H||P|$, so
$\log|A|\ge(a(\mu)-o(1))d\to\infty$. Moreover
\[
 |AA|\le|HH||PP|
       \le e^{(\lambda(\mu)+o(1))d}|H||P|^2
       =|A|e^{(p(\mu)+\lambda(\mu)+o(1))d}.
\]
Together with \eqref{smooth:b-rate}, the strict inequalities absorb
the $o(d)$ errors and prove both bounds. An increase in $|H|$
only improves these comparisons; an upper estimate for $|H|$
is unnecessary.
Use the specified real embedding to obtain a real set. Since
$A\subset I$, choose a positive integer $D$ with $DI\subset\OO_L$.
Replacing $A$ by $DA$ makes all elements integral. It preserves
$|A|$ and $|A+A|$, while the product set becomes $D^2AA$ and therefore
has the same cardinality as $AA$.
\end{proof}

\section{The variational criterion and the exponent}
\label{sec:parameters}

Put $q_*=1.95835$ and $\sigma=2-q_*=0.04165$. The parameters are
\begin{equation}\label{eq:parameters}
\begin{gathered}
 c=0.0301,\quad \alpha=1.07187093,\quad \tau=0.09584694,\\
 H_0=4.63078205,\quad M=1.70613669,\quad
 \varepsilon=0.44721359,\\
 \delta=84.37558833,\quad C=16800001,\quad
 E_0=0.0000562,\quad \eta_0=0.0002.
\end{gathered}
\end{equation}
Write $U(c)$ for the constant $U$ in Proposition~\ref{arith:hR};
our $c$ equals $c_*$ of that proposition.
Here $5\varepsilon^2<1$, $H_0<H_\alpha$, and $M>B_\alpha$.
The latter comparisons are established by the rational interval
bounds in Appendix~\ref{app:intervals}.

Use the prime families from the arithmetic construction. For a
family with base norm $Q$, multiplicity $a\in\{1,2\}$, and residual
degree $f$, let
\[
 W=Q^f,\qquad \beta^{\rm full}=\frac{a}{10f},\qquad
 n_W=(W^\sigma-1)^{-1},\qquad
 Z(W)=-\log(1-W^{-\sigma})-\log2.
\]
Select $\lfloor\beta^{\rm full}d\rfloor$ descendant prime ideals if
$Z(W)>0$, and select none otherwise. Their norms are at most $W$.
Thus the selected density $\beta$ is $\beta^{\rm full}$ or zero.
This rule depends on the family and its residual degree, which are
fixed on the chosen cofinal subsequence. In a selected family, use
the truncation parameter $r_a=n_W$ in Section~\ref{sec:sunits}.

Let $\mathcal C_{Q,f}$ denote the signature correction for this
family in the arithmetic estimate (zero for a split pair), and put
\[
 v_{Q,f}=\frac{a}{10f}V(Q^f),\qquad
 K=\sum_{\text{families}}\mathcal C_{Q,f}.
\]
The summation includes all controlled families, including those
not selected for the smooth factor. In the notation of
Section~\ref{sec:sunits}, define
\begin{align*}
 G_\mu&=\frac{1+\mu}{2}\log(H_0/\tau)
             -\frac\mu2\log\pi-U(c)+\frac{c\mu}{2},\\
 C_\mu&=\log\varepsilon-\frac12\log\delta
                           +\frac{1-\mu}{2}\log(\pi/2),\\
 \lambda_0&=\frac{1+\mu}{2}\log2,\qquad
 \gamma=\sum\beta\log2,\\
 T&=\sum\beta n_W\log W,\qquad
 g=G_\mu-\sum v_{Q,f}-\mu K+\sum\beta h(n_W),\\
 \lambda&=\lambda_0+\gamma,\qquad
 B_0(\mu)=\frac{\log F_\tau(M)}\tau+\log(1+\varepsilon)
                   +\mu D_{1,\tau}+(1-\mu)D_{2,\tau},\\
 B(\mu)&=B_0(\mu)+E_0.
\end{align*}
The arithmetic estimate for $h_LR_L/2^{r_2}$ supplies this lower
rate $g$. Replacing $U(c)$ by a rigorous rational upper endpoint only
decreases $g$ and is permitted in all comparisons below.

\subsection{Balancing the two cardinality bounds}
Initially allow a real logarithmic centre $t$, and set
\begin{equation}\label{eq:balanced-centre}
 t=T+B(\mu)-g-2(C_\mu-\gamma)-\lambda,
 \qquad p=t+C_\mu-\gamma,\qquad a_*=g+p.
\end{equation}
The rates in Section~\ref{sec:sunits} then give
$b\le a_*+p+\lambda$, provided the packing error is at most $E_0$.
Consequently the two strict cardinality comparisons both follow
from
\begin{equation}\label{eq:Phi}
 \Phi=g-\lambda-\sigma a_*>0.
\end{equation}
The geometric-series entropy identity
\[
 h(n_W)-\sigma n_W\log W=-\log(1-W^{-\sigma})
\]
gives the useful separated form
\begin{equation}\label{eq:separated-criterion}
 \Phi=G_\mu-(1-\sigma)\lambda_0
       -\sigma(B(\mu)-C_\mu)
       +\sum\left\{\frac{a}{10f}Z(Q^f)_+
                        -v_{Q,f}-\mu\mathcal C_{Q,f}\right\}.
\end{equation}
In particular, the selection rule maximizes the contribution
obtained from the available family density in this bound.

\begin{lemma}\label{lem:nonnegative-new}
For every integer $u\ge2$,
\[
 Z(u)_+\ge V(u).
\]
\end{lemma}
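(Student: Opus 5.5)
Since $Z(u)_+\ge0$, the inequality is automatic wherever $V(u)\le0$, so I would only have to treat the integers $u$ with $V(u)>0$. On that set I would show directly that $Z(u)\ge V(u)$.

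Throughout I would use the elementary bounds recorded before \eqref{arith:cutoff}, namely $b(u)\le1/(u-1)$ and $a(u)\ge2\log u/(u+1)$. These give
\[
 V(u)\le\frac{1}{u-1}\Bigl(1-2c\log u\,\frac{u-1}{u+1}\Bigr).
\]
The two functions have opposite-signed thresholds that nearly coincide.
\begin{itemize}
\item $Z(u)>0$ exactly when $u<u_0:=2^{1/\sigma}$. Since $1/\sigma\approx24.01$, this is $u_0\approx1.69\cdot10^7$.
\item The bracket above is negative as soon as $2c\log u>(u+1)/(u-1)$, i.e.\ roughly for $u>e^{1/(2c)}\approx1.64\cdot10^7$.
\end{itemize}
So the first step is the tail: for every $u\ge u_1$, with a suitable rational $u_1$ slightly below $u_0$, I would verify $V(u)<0$. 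The product $\log u\,(u-1)/(u+1)$ is increasing, so this reduces to one rational interval check at $u=u_1$. The point is that $2c\log2/\sigma\approx1.0019$ exceeds $(u+1)/(u-1)$ by a comfortable margin there. On $[u_1,\infty)$ this gives $Z_+\ge0>V$.

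For the remaining range $2\le u<u_1$ we have $Z(u)>0$, and I would compare the two functions directly.

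For small and moderate $u$, I would use $1-e^{-y}\le y$ with $y=\sigma\log u$. This gives
\[
 Z(u)\ge-\log(2\sigma\log u),
\]
which is large for small $u$, while $V(u)\le b(u)\le\log2$. For instance, at $u=2$ one has $Z(2)\approx2.86$ against $V(2)<\log 2$. This comparison settles an initial segment $u\le U_1$, with $U_1$ somewhat below $e^{1/(2\sigma)}\approx1.6\cdot10^5$, via a check on a few dyadic blocks: there $Z$ is decreasing and $1/(u-1)$ is decreasing.

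Between $U_1$ and $u_1$ I would cover the range by finitely many intervals $[L,R)$, say dyadic blocks refined near the end. On each block I would certify, with outward rational interval arithmetic as in Appendix~\ref{app:intervals},
\[
 Z(R)\ \ge\ \frac1{L-1}-\frac{2c\log L}{R+1}.
\]
This suffices because $Z$ is decreasing and the right side bounds $V$ from above on the whole block.

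The main obstacle is the narrow window just below $u_0$. There $Z(u)\to0^+$ while $V(u)$ is only of size $O(1/u)\approx6\cdot10^{-8}$, and it could in principle be positive. I expect to handle this by checking that $e^{1/(2c)}$, corrected by the factor $(u+1)/(u-1)$, lies strictly below $u_0$ with room to spare. Then the tail interval $[u_1,\infty)$, on which $V<0$, overlaps the region where $Z$ is still bounded below by an explicit positive constant on the last block. If the margin proves too thin for dyadic blocks, I would subdivide the final stretch more finely. In every block the endpoint evaluations of $\log$ and of $u^{-\sigma}=e^{-\sigma\log u}$ would be certified by rational enclosures.
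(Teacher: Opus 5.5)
Your proof is correct and follows essentially the paper's route. Both arguments use three facts: $Z$ is decreasing, $V$ is bounded above blockwise by a decreasing bound, and finitely many rational interval checks cover $[2,u_1)$ together with $V<0$ beyond a cutoff below $2^{1/\sigma}$; the paper takes $u_1=C$ and simply reuses \eqref{arith:cutoff}.

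The ``narrow window'' you worry about is harmless. $Z(C)\approx2\cdot10^{-4}$ already exceeds $b(u)<\log(10000/9999)$ for $u>10^4$. So the paper needs only the crude bound $V\le b$, with no dyadic refinement, and three checks: $Z(7)>\log2$, $Z(10000)>\log(7/6)$ and $Z(C)>\log(10000/9999)$.
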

\begin{proof}
$Z$ decreases with $u$, and $V(u)\le\log(u/(u-1))$.
Rational interval comparisons give
\[
 Z(7)>\log2,\qquad
 Z(10000)>\log(7/6),\qquad
 Z(C)>\log(10000/9999).
\]
These prove the assertion respectively on $[2,7]$, $(7,10000]$,
and $(10000,C)$. For $u\ge C$ the signed weight is nonpositive,
by the cutoff estimate in the arithmetic section. The displayed finite comparisons follow from the rational interval
evaluations described in Appendix~
\ref{app:intervals}.
\end{proof}

It follows that every newly controlled family has nonnegative
contribution to \eqref{eq:separated-criterion} at $\mu=0$.
For positive $\mu$, only its signature correction must be paid.
The nine ramified primes and seven original split pairs are kept
individually, with the residual-degree branches in their original
cuts. Their contributions are bounded below by the minimum over
those branches. For all other families, we use
Lemma~\ref{lem:nonnegative-new} and the total inert correction
bound from the arithmetic section. Since a minimum of finitely
many affine functions is concave, the resulting lower bound for
$\Phi$ is concave on $0\le\mu\le3/5$. Its two endpoint values
therefore suffice.

\subsection{Positive rates and the packing bootstrap}
We check the centre and the sizes separately, so that the
packing estimate is not used circularly. For a selected family put
$L=\log W$. Its contributions to $t$ and $p$ in
\eqref{eq:balanced-centre}, apart from the signature correction,
are
\[
 v_{Q,f}+\beta\{n_WL-h(n_W)+\log2\},\qquad
 v_{Q,f}+\beta\{n_WL-h(n_W)\},
\]
respectively. The signature correction adds
$\mu\mathcal C_{Q,f}$ to both.
For a selected family, $\sigma L<\log2$ and $L\ge\log2$.
Thus, with $x=\sigma L$,
\[
 \sigma\log2\le x<\log2.
\]
The function $x/(e^x-1)$ is decreasing, whereas
$\log(1-e^{-x})$ is increasing. Hence
\begin{align*}
 n_WL-h(n_W)
 &=\frac{1-\sigma}{\sigma}\frac{x}{e^x-1}
                         +\log(1-e^{-x})\\
 &\ge\frac{1-\sigma}{\sigma}\log2
                         +\log(1-2^{-\sigma})>1.
\end{align*}
Also
\[
 V(W)\ge-\frac{2c\log W}{W-1}
                       \ge-2c\log2>-1.
\]
Thus these contributions are nonnegative. Moreover, for a selected
family,
\[
 h(n_W)=\sigma n_W\log W-\log(1-W^{-\sigma})
       > Z(W)=Z(W)_+\ge V(W),
\]
where the last inequality is Lemma~\ref{lem:nonnegative-new}. Hence
$\beta h(n_W)-v_{Q,f}=\beta(h(n_W)-V(W))\ge0$.

For an inactive family, $W\ge u_*=2^{1/\sigma}$. The coarse
prime count from Section~\ref{sec:arithmetic} gives at most $10C$
controlled primes of $F$; equivalently, the family multiplicities
satisfy $\sum a\le10C$. The total possible loss to $t$ or $p$ from
inactive negative Euler terms is therefore at most, using the rigorous
integer lower bound $u_0=16889272<u_*$,
\begin{equation}\label{eq:inactive-loss}
 D_* = C\frac{2c\log u_0}{u_0-1}
       <0.996564355.
\end{equation}
Here $\log u/(u-1)$ decreases for $u>1$, and each family coefficient
is $a/(10f)\le a/10$. Keeping the original families' minimum
contributions before applying this coarse loss gives the rate
bounds in Table~\ref{tab:final}. For $g$, it suffices to subtract
the total inert and ramified corrections from $G_\mu$.
All these lower bounds are affine or concave in $\mu$, so their
endpoint checks cover the interval. To bound the balanced centre
from below we drop the positive packing term $E_0$; in particular
this step is independent of the packing estimate.

We obtain $t>17$, $T\ge0$, and hence
\[
 S=F_\tau(M)(1+\varepsilon)^\tau e^{\tau(t+T)}
    \ge F_\tau(M)(1+\varepsilon)^\tau e^{17\tau}.
\]
The explicit outer-parallel error decreases with $S$. Rational
intervals in its formula now give
\begin{equation}\label{eq:packing-numeric}
 E_\tau(S)<0.00005611486<E_0.
\end{equation}
This proves the required packing bound.

\subsection{An integral centre and completion of the proof}
The additive sieve requires an integer $X$ coprime to all selected
residue characteristics. Since $e^{17}>C$ and $e^{17}>396738$,
Dusart's unconditional prime-interval estimate
\cite[Proposition~6.8]{Dus} supplies a rational prime $X$ with
\[
 e^t<X\le e^t\left(1+\frac1{25t^2}\right).
\]
Thus $X>C$ and
\[
 0<\log X-t<\frac1{25\cdot17^2}<\eta_0.
\]
All selected residue characteristics are smaller than $C$, so $X$
is coprime to them. Increasing the logarithmic centre by $\eta$
increases $p$, $a_*$, and the sumset bound by $\eta$, while the
packing error only decreases. The product comparison loses
$\sigma\eta$ and the sumset comparison improves by
$(q_*-1)\eta$. It is therefore enough to subtract
$\sigma\eta_0$ from the rigorous lower bound for $\Phi$.

\begin{table}[ht]
\centering
\caption{Strict rational lower bounds. The criterion includes
the full centre-rounding loss $\sigma\eta_0$. The centre and size
columns were bounded with the packing term omitted.}
\label{tab:final}
\begin{tabular}{ccccc}
\toprule
$\mu$ & $\Phi-\sigma\eta_0$ & $t$ & $p$ & $g$\\
\midrule
$0$ & $0.0001795984$ & $18.0810904719$ & $15.2325377565$ & $0.9667653864$\\
$3/5$ & $0.0960506190$ & $17.6455208284$ & $14.6614933014$ & $0.9714028747$\\
\bottomrule
\end{tabular}
\end{table}

The outward rational interval computations prove the strict
inequalities in Table~\ref{tab:final}; Appendix~\ref{app:intervals}
specifies the required estimates and rounding conventions. Concavity gives strict positive margins throughout
$[0,3/5]$. Choose the prime centre just described and apply
Proposition~\ref{smooth:criterion}. The margins absorb all
asymptotic $o(d)$ errors. The resulting sets have cardinalities
tending to infinity. A common positive integer scaling clears
the fractional ideal denominators and preserves the cardinalities
of the set, its sumset, and its product set. This proves
Theorem~\ref{thm:main}.

\begin{remark}
The proof does not assert optimality of this exponent. In
particular, the maximal-translate doubling bound for the mixed
$S$-unit lattice costs $\log2$ per valuation dimension. The
integer-point entropy of a simplex alone does not justify a
smaller doubling constant for an arbitrary class-group valuation
sublattice. Removing the additive factor would also require a
different global sumset argument.
\end{remark}

\appendix
\section{Rational interval computations}\label{app:intervals}

All numerical assertions in this appendix are evaluated using exact
integers and rational intervals. A terminating decimal endpoint is
interpreted as an exact rational number. For an interval $[l,u]$, the
lower endpoint is rounded downward and the upper endpoint upward to
multiples of $10^{-32}$, using integer division. Sums, products,
quotients with nonzero denominators, and monotone functions are evaluated
with outward rounding. Thus every strict numerical comparison used below
is reduced to a comparison of rational endpoints.

For $1\le x\le2$, put $z=(x-1)/(x+1)$. We use
\begin{equation}\label{cert:log}
 \log x=2\sum_{j=0}^{N-1}\frac{z^{2j+1}}{2j+1}+R_N,\qquad
 0\le R_N\le\frac{2z^{2N+1}}{(2N+1)(1-z^2)}.
\end{equation}
A positive rational argument is reduced to $[1,2]$ by an integer
power of $2$. The main comparisons use $N=45$.
Machin's identity
\[
 \pi=16\arctan(1/5)-4\arctan(1/239)
\]
and alternating Taylor series enclose $\pi$.
For Euler's constant, let $H_N=\sum_{j=1}^N1/j$ and put
\[
 J_N=H_N-\log N-\frac1{2N}+\frac1{12N^2}-\frac1{120N^4}.
\]
Using $\psi(N+1)=H_N-\gamma_E$, the signed digamma remainder
\cite[\S5.11(ii)]{DLMF} gives
\[
 J_N<\gamma_E<J_N+\frac1{252N^6}.
\]
We take $N=1000$.

For an integer $k\ge3$, write $Z_{k,N}=\sum_{j=1}^Nj^{-k}$.
The integral test gives
\[
 Z_{k,N}+\frac1{(k-1)(N+1)^{k-1}}
 <\zeta(k)<
 Z_{k,N}+\frac1{(k-1)N^{k-1}}.
\]
We take $N=2000$ and use $\zeta(2)=\pi^2/6$.
For $0<z<1$, the expansions
\cite[eqs.~(5.7.3)--(5.7.4)]{DLMF} and the Gamma functional equation give
\begin{align*}
 \log\Gamma(z)&=-\log z-\gamma_Ez+
      \sum_{k=2}^\infty\frac{(-1)^k\zeta(k)}kz^k,\\
 \psi(z)&=-\frac1z-\gamma_E+
      \sum_{k=2}^\infty(-1)^k\zeta(k)z^{k-1}.
\end{align*}
After term $k=100$, the absolute remainders are at most
$2z^{101}/(101(1-z))$ and $2z^{100}/(1-z)$, since $\zeta(k)<2$.
For arguments close to $1$, one can instead use the corresponding
series for $\log\Gamma(1+z)$ with $|z|\le1/2$ and the same absolute
tail estimate. Positive integer arguments are evaluated by factorials;
other arguments are reduced by $\Gamma(x+1)=x\Gamma(x)$.

For the entropy constants, combining the expansions gives, for $0<z<1$,
\begin{align*}
 \log H_z&=z+1-\log z+
   \sum_{k=2}^\infty(-1)^{k+1}\frac{k-1}{k}\zeta(k)z^k,\\
 \log B_z&=\log z+\frac1z+\gamma_E+
   \sum_{k=2}^\infty(-1)^{k+1}\zeta(k)z^{k-1}.
\end{align*}
Their absolute tail bounds after term $k=100$ are
$2z^{101}/(1-z)$ and $2z^{100}/(1-z)$.
The parameter $\alpha=1.07187093$ is greater than $1$, so we apply
these series at $z=\alpha-1$ and use the exact recurrences
\[
 \log H_\alpha=\log H_z+\log B_z-\frac1z,\qquad
 \log B_\alpha=\log B_z+\log(\alpha/z)-\frac1z.
\]
These follow immediately from
$H_x=\Gamma(x)e^{x(1-\psi(x))}$,
$B_x=xe^{-\psi(x)}$, and
$\psi(x+1)=\psi(x)+1/x$.
The resulting intervals verify
\[
 4.63078206355<H_\alpha<4.63078206356,\qquad
 1.70613667289<B_\alpha<1.70613667290.
\]

Square roots are enclosed by integer square roots:
for a nonnegative rational $x$, if
$m=\lfloor\sqrt{\lfloor10^{64}x\rfloor}\rfloor$, then
\[
 m/10^{32}\le\sqrt{x}<(m+1)/10^{32}.
\]
For $0\le x<242$, with
$s=\sum_{j=0}^{240}x^j/j!$ and $u=x^{240}/240!$, we have
\[
 s\le e^x\le s+\frac{ux}{241(1-x/242)}.
\]
Negative exponential arguments are handled using $e^{-x}=1/e^x$.
For stable evaluation of the two-point moment, put
$\ell=\log(M+\sqrt{M^2-1})$ and use
\[
 \log F_\tau(M)=\ell+
           \tau\log(1+e^{-2\ell/\tau}).
\]

For the Euler weights at small integer arguments, a finite sum
through an integer cutoff $N_0$ is completed with
\[
 0<\sum_{m=N_0+1}^{\infty}\frac1{u^m+1}
      <\frac1{u^{N_0}(u-1)}.
\]
For $u\ge1000$ the wider, faster intervals
\[
 \frac1u\le b(u)\le\frac1{u-1},\qquad
 \frac{2\log u}{u+1}\le a(u)\le\frac{2\log u}{u-1}
\]
are sufficient. All branch minima are bounded below by the minimum
of their rational lower endpoints; correction maxima are bounded
above by the maximum of their upper endpoints.

The finite-field data used above are obtained directly from the
defining polynomial, rather than from a precomputed numerical table.
The prime sieve and the integer-sum tail bounds in the arithmetic section
are evaluated from the explicit formulas given there. Together with the
outward-rounding rules above, these exact computations establish the GS
inequality, the cutoff, the selection inequality, the entropy-body
constants, both signature endpoints, positivity of the rates, the packing
error, and the centre-rounding loss. They justify every strict decimal
bound stated in the manuscript.

\section*{Use of AI-assisted tools}

Generative AI tools were used to assist with selected computations,
literature searches, and \LaTeX{} editing. The mathematical ideas,
constructions, proofs, and conclusions were developed by the authors.
References identified with AI assistance were checked against the original
sources, and any AI-assisted computational output used in the manuscript
was independently verified. The numerical inequalities used in the proof
are established by the exact rational interval estimates described in
Appendix~A. The authors are responsible for the content of the manuscript.

\end{document}